\numberwithin{equation}{section} 
\newtheorem{Thm}{Theorem}[section] 
\newtheorem{Prop}[Thm]{Proposition} 
\newtheorem{Lem}[Thm]{Lemma} 
\newtheorem{Cor}[Thm]{Corollary} 
\theoremstyle{remark}
\theoremstyle{definition} 
\newtheorem{Def}[Thm]{Definition} 
\newtheorem{Exa}[Thm]{Example}
\newcommand{\R}{\mathbb{R}}
\newcommand{\N}{\mathbb{N}}
\newcommand{\Q}{\mathbb{Q}}
\newcommand{\F}{\mathbb{F}}
\newcommand{\mF}{\mathcal{F}}
\newcommand{\ot}{\otimes}
\newcommand{\bit}{\begin{itemize}}
\newcommand{\eit}{\end{itemize}}
\author{Paolo Salvatore}
\address{Dipartimento di Matematica,  
Universit\`{a} di Roma Tor Vergata, 
Via della Ricerca Scientifica, 
00133 Roma, Italy}
\email{salvator@mat.uniroma2.it}
\thanks{The author acknowledges the MIUR Excellence Department Project awarded to
the Department of Mathematics, University of Rome Tor Vergata, CUP E83C18000100006}
\title[Planar non-formality of the little discs]{Planar non-formality of the little discs operad in characteristic two}
\begin{document}

\begin{abstract}
We show that the little discs operad $D_2$ is not formal over $\F_2$ as a planar (or non-symmetric) operad. We compute explicitly the homological obstruction 
using as chain model the cells of the spineless cacti operad. 
\end{abstract}

\maketitle

\section{Introduction}
The little discs operads $D_n$ have been heavily studied since their introduction by Boardman-Vogt in the seventies.
A major result states that they are formal over a field of characteristic 0 as symmetric operads.
This was proved by Kontsevich \cite{Kont} and Lambrechts-Volic \cite{LV} over $\R$, by Tamarkin over $\Q$ for $n=2$ \cite{Tam}, and later by 
Fresse and Willwacher over $\Q$ for $n>2$ \cite{FW}. We recall that a topological operad $O$ is formal over a ring $R$ if  
its homology operad $H_*(O,R)$ and its 
singular chain operad $C_*(O,R)$ are connected by a zig-zag of quasi-isomorphisms, i.e. chain operad maps 
inducing an isomorphism in homology.
It is not difficult to see that $D_n$ cannot be formal over a field $\F$ of positive characteristic as a symmetric operad ( Remark 6.9 in \cite{CH}). 
However if we forget the action of the symmetric groups, and we consider $D_n$ as  a planar (or non-symmetric) operad, then
the weaker notion of planar formality over $\F$ is much harder to check. A reason to study it is the relation to knot theory:
there is a vaste literature relating the space $K_n$ of long knots in $\R^n$ to $D_n$. \cite{Sinha}
For example if the pair $(D_n,D_1)$ was a formal
pair of planar operads, then the Sinha-Vassiliev spectral sequence computing the homology of $K_n$ (for $n>3$) would collapse. This  is a 20 year old
conjecture by Vassiliev that has been verified in characteristic 0 using the rational formality of $D_n$ \cite{0703649}. A related result 
by Turchin and Willwacher \cite{TW} states that rationally the pair $(D_{n},D_{n-k})$ is formal for $k \geq 2$ and not formal for $k=1$. 
In this paper we show:
\begin{Thm} \label{main}
The little 2-discs operad $D_2$  is {\em not} formal as a planar operad over $\F_2$.
\end{Thm}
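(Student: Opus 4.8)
The plan is to show that no zig-zag of quasi-isomorphisms of planar dg-operads can connect the singular chains $C_*(D_2;\F_2)$ to their homology $H_*(D_2;\F_2)$, by exhibiting a secondary operation — a Massey-type product — that is forced to vanish in the formal case but is demonstrably nonzero over $\F_2$. The strategy is thus obstruction-theoretic: pin down the first potentially nonzero formality obstruction and compute it explicitly.

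First I would replace the singular chains by a small, combinatorial model. The spineless cacti operad $\mathcal{Cact}$ is weakly equivalent to $D_2$ as a planar operad and carries a natural CW structure, so its cellular chain complex $C^{cell}_*(\mathcal{Cact};\F_2)$ is a planar dg-operad quasi-isomorphic to $C_*(D_2;\F_2)$. Since planar formality is invariant under such equivalences, it suffices to compare $C^{cell}_*(\mathcal{Cact};\F_2)$ with $H_*(D_2;\F_2)$. I would then record the planar homology operad explicitly: it is generated by the product $m\in H_0(D_2(2))$ and the degree-one class $b\in H_1(D_2(2))\cong\F_2$ coming from $D_2(2)\simeq S^1$, subject to associativity of $m$ and the (planar, characteristic-two) Browder relations among the compositions of $b$ in $H_*(D_2(3))$.

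The heart of the argument is to locate and compute the obstruction. Formality would mean that $H_*(D_2;\F_2)$ admits an operadic $\infty$-model with vanishing higher operations, so that every operadic Massey product is trivial. I would therefore single out a triple operadic Massey product built from $b$ and $m$ whose defining sub-products vanish in $H_*(D_2;\F_2)$, choose explicit bounding cells in $C^{cell}_*(\mathcal{Cact};\F_2)$ for those vanishing products, and compute the resulting cycle in the cacti chains. The expectation, driven by the characteristic-two degeneration of the Jacobi identity — where the self-composition of the bracket survives as a squaring phenomenon invisible rationally — is that this cycle represents a nonzero class lying outside the indeterminacy of the Massey product.

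The main obstacle is \emph{rigidity}: I must show the computed class is a genuine homotopy invariant, i.e. that it cannot be removed by any choice of quasi-isomorphisms in a connecting zig-zag. This amounts to controlling the full gauge freedom of the homotopy transfer, equivalently to showing that the obstruction is a nonzero element of the operadic deformation cohomology of the planar operad $H_*(D_2;\F_2)$ over $\F_2$ in the relevant bidegree. Because the candidate class vanishes over $\Q$ — consistent with Tamarkin's rational formality — the entire computation hinges on $\F_2$-specific torsion in the cacti complex, and the delicate step is to verify, cell by cell in $\mathcal{Cact}(2)$ and $\mathcal{Cact}(3)$, that the representing cycle is neither a boundary nor in the image of any lower operation, so that it persists as a true obstruction to planar formality.
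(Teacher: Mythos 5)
Your overall strategy---replace $C_*(D_2;\F_2)$ by the cellular chains of the spineless cacti operad, compute a secondary (Massey-type) operation, and show it survives all gauge choices---is exactly the method of the paper, which formalizes it as Halperin--Stasheff obstruction theory for planar operads. But as written the proposal has two genuine gaps. First, you never identify the obstruction, and the heuristic you offer points in the wrong direction: in the \emph{planar} setting the Jacobi identity contributes no relation at all (planarly, the degree-2 part of arity 3 is spanned freely by $b\circ_1 b$ and $b\circ_2 b$, which map isomorphically to $H_2(D_2(3))$), so there is no ``squaring of the bracket'' phenomenon to exploit in arity 3. The relations that matter are associativity of $m$ and the Poisson relation mixing $m$ and $b$; the first obstruction is attached to an arity-4, level-2 generator $C$ of the bigraded model, whose differential involves the homotopies $A$ (killing associativity) and $B$ (killing Poisson), and the obstruction class turns out to be $[x_1,x_4][x_2,x_3]\in H_2(D_2(4))$. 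Consequently your plan to verify things ``cell by cell'' in arities 2 and 3 looks in the wrong place: the obstruction cycle lives in $S_2(4)$ (arity 4), while arity 3 only supplies the gauge freedom.

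Second, the rigidity step, which you correctly flag as the main obstacle, is precisely the heart of the proof and is left entirely open. In the paper it reduces to a finite linear-algebra check: the relevant gauge group is the 5-dimensional space $Hom_0(W^1(3),H_*(D_2(3)))$, spanned by maps $f_1,\dots,f_5$ sending $A$ to the three degree-1 classes and $B$ to the two degree-2 classes of $H_*(D_2(3))$, and one computes $\partial(f_j)$ on \emph{both} arity-4 level-2 generators $P$ and $C$. Since the obstruction $\alpha$ satisfies $\alpha(P)=0$ while $\partial(f_1)(P),\partial(f_2)(P),\partial(f_3)(P)$ are linearly independent, only combinations of $f_4,f_5$ could produce $\alpha$; but
$$\partial(f_4)(C)=\partial(f_5)(C)=[x_1,x_4][x_2,x_3]+[x_1,x_2][x_3,x_4]\neq \alpha(C)=[x_1,x_4][x_2,x_3],$$
so $\alpha\notin Im(\partial)$. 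Without setting up the bigraded/filtered model (or an equivalent $\infty$-structure with controlled indeterminacy) and performing this computation, your argument does not rule out that the nonzero cycle you find can be gauged away: showing the representing cycle is ``not a boundary'' is far from sufficient, because the indeterminacy is the image of $\partial$ on gauge maps, not the image of the chain-level differential.
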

An immediate consequence is that $D_2$ is not formal over any field of characteristic 2, and also over the integers. 
We prove Theorem \ref{main} by obstruction theory, adapting the work of Halperin-Stasheff to the framework of planar operads. 
We use a finite dimensional model for the chain operad of the little 2-discs, that is basically the cell complex of the spineless cacti operad (\cite{AGT}, sect. 4), or equivalently the second filtration of the surjection operad (1.2 in \cite{BF} ). 
It should be mentioned that our theorem does not disprove Vassiliev's conjecture.  
 The non-formality of $D_2$ as a planar operad, together with the non-formality of the single spaces $D_2(k)$ in characteristic 2 for $k>3$, that we proved in
\cite{formality2}, show that the behaviour in characteristic 2 is opposite to that in 
characteristic 0, where instead Hopf operad formality holds \cite{FW}
(both at the level of spaces and operads). We plan to investigate in future work the formality of $D_n$ for $n>2$, and the case of odd characteristic.
I am grateful to Joana Cirici and Benoit Fresse for some fruitful discussions.

\section{Obstruction theory for planar operads}

The obstruction theory by  Halperin-Stasheff \cite{HS} for rational commutative algebras can be translated almost verbatim to the framework of planar operads. 
The rational version for symmetric operads was worked out by Markl \cite{Markl}. 
We recall the definition of a planar operad in a concrete symmetric monoidal category, that in practice will be 
either the category $Top$ of topological spaces with the cartesian product, or the category $Ch_\F$ of $\N$-graded chain complexes over
a field $\F$ with the tensor product.

\begin{Def} 
A planar operad in a concrete symmetric monoidal category $(\mathcal{C},\ot)$
is a sequence of objects $O(k)$ of $\mathcal{C}$,  with $k \in \N$ (  $k$ is called the arity),

 together with
an element $\iota \in  O(1)$ called the unit, and composition maps
$$\_\circ_i\_:O(k)\ot O(l) \to O(k+l-1) \quad 1\leq i \leq k$$
such that
for $a \in O(n), \, b \in O(p), \,c \in O(q)\, $ 
$$(a \circ_i b)\circ_{j+p-1} c = (a \circ_j c)\circ_i b
 \quad {\rm for }\quad   1 \leq i <j \leq n$$  
 $$a \circ_i (b \circ_j c)=(a \circ_i b) \circ_{i+j-1} c 
 \quad{\rm for}\quad  1 \leq i \leq n,\, 1 \leq j \leq p $$
and $\iota$ is a bi-sided unit for the $\circ_i$-operations.
\end{Def}
A symmetric operad $O$ is a planar operad equipped with the action of the symmetric group $\Sigma_k$ on $O(k)$ for each $k$,
so that the actions and the composition maps are compatible in an appropriate sense.  
A planar operad in $Top$ is called a planar topological operad, and an operad in $Ch_\F$ is called 
a differential graded planar operad (DGPO) over $\F$.  
A DGPO equipped with a trivial differential is called a graded planar operad (GPO). A GPO  $O$ is reduced if  
$O(0)=0$ and $O(1)=\F\{\iota\}$.

We sketch the construction of the bigraded model of a GPO.
Let $\mF$ be the free functor from the category of sequences of graded vector spaces $V=\{V(i)\}_{i \geq 2}$ to the category of reduced graded planar operads, that is 
left adjoint to the forgetful functor. Roughly speaking the free functor is constructed by means of directed planar trees with vertices labelled by elements of $V$ (compare Def. 6 in \cite{Lie}). There is a splitting $$\mF(V)=\mF^+(V) \oplus V \oplus \F\{\iota\},$$ where $\mF^+(V)$ contains the decomposable elements. 

\begin{Def}
A bigraded DGPO is one of the form $(\mF(W),d)$, where $$W=(W(k))_{k \geq 2}= (\oplus_{i,j}W^i_j(k)  )_{k \geq 2}$$ is a sequence of bigraded vector spaces,
We say that $W^i_j(k)$ contains the elements of arity $k$, dimension $j$ , and level $i$. The dimension is the grading of the underlying chain complex, 
and the level is an additional grading. We denote $W^i = \oplus_{j,k} W_j^i(k),\, W^{\leq i} =\oplus_{i' \leq i} W^{i'}$, and
$W(\leq k)= \oplus_{k' \leq k} W(k')$.
By adding up degrees  $\mF(W)(k)$ inherits a bigrading for each $k$. We require $d$ to be homogeneous of bidegree $(-1,-1)$ with respect to 
the dimension and the level grading. 
\end{Def}

We can consider the homology with respect to the level grading obtaining for each $i$ a GPO $H^i(\mF(W),d)$.

\begin{Prop} 
Let $H$ be a reduced GPO with a presentation $H=\mF(V)/(R)$, where $V=\{V(i)\}_{i \geq 2}$ is a sequence of graded vector spaces,
$R \subset \mF(V)$ is a subsequence of graded vector space, and $(R)$ is the operadic ideal generated by $R$.
Then there is a bigraded DGPO $(\mF(W),d)$ and 
a quasi-isomorphism $$\rho: (\mF(W),d) \to (H,0)$$   of DGPO such that 
\bit
\item $W^0 = V$, 
\item $\rho_{| \,V}: V \to H$ is the inclusion
\item $\rho_{| \, W^i}:W^i \to H$ is trivial for $i>0$ 
\item $H^i(\mF(W),d)=0$ for $i>0$ 	 and $H^0(\mF(W),d) \stackrel{\rho_*}{\cong} H$
\item $d(x) \in \mF^+(W)$ for $x \in W$ (the differential is decomposable)
\eit
Under these conditions $(\mF(W),d)$ is uniquely defined up to isomorphism, and is called the {\em bigraded model}, or the {\em minimal model} of $H$.
In particular $W^1={\rm s}R$ is the suspended sequence of relations, where {\rm s} raises the dimension degree by one.
In general  $$W^i={\rm s}(H^{i-1}(\mF(W^{\leq (i-1)},d)  )  )/H$$ is the suspended module of $H$-bimodule-indecomposables
and $$d:W^i \to \mF(W^{\leq (i-1)})$$ is a splitting of the projection.  
\end{Prop}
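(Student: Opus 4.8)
The plan is to construct the bigraded model $(\mF(W),d)$ by induction on the level $i$, building $W^{\le i}$ and the differential $d$ restricted to $W^{\le i}$ one level at a time, while simultaneously producing the quasi-isomorphism $\rho$. This is the standard Halperin-Stasheff/Tate-Jozefiak approach of killing homology level by level, and the whole argument is dictated by the five bulleted conditions, which essentially prescribe the construction. The only real input specific to the operadic setting is that the free functor $\mF$, the splitting $\mF(V)=\mF^+(V)\oplus V\oplus\F\{\iota\}$, and the notion of $H$-bimodule over an operad all behave formally like their associative-algebra counterparts, so that the homological algebra goes through verbatim.

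First I would set up level $0$ and $1$. Put $W^0=V$ with $d|_{W^0}=0$, and let $\rho|_V\colon V\to H$ be the inclusion coming from the presentation $H=\mF(V)/(R)$; extending multiplicatively gives a surjection $\rho\colon\mF(W^0)=\mF(V)\to H$ with $H^0=H/(R)$ not yet correct and a kernel detected in $H_*$. To repair surjectivity-of-homology and kill the unwanted classes, set $W^1=\mathrm{s}R$, so each relation $r\in R\subset\mF(V)$ is replaced by a level-$1$ generator $\mathrm{s}r$ of dimension one higher with $d(\mathrm{s}r)=r\in\mF^+(W^0)$; then $\rho(\mathrm{s}r)=0$, which is forced since $\rho$ must be trivial on $W^{i}$ for $i>0$ and $r$ maps to $0$ in $H$. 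By construction $H^0(\mF(W^{\le 1}),d)\cong H$ and the obstruction to $\rho$ being a quasi-isomorphism now lives in positive level.

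For the inductive step, assume $(\mF(W^{\le i-1}),d)$ and $\rho$ have been built so that $H^j(\mF(W^{\le i-1}),d)=0$ for $0<j<i$ and $H^0\cong H$. I would then examine the level-$i$ homology $H^{i-1}(\mF(W^{\le i-1}),d)$; it is a module over the operad $H=H^0$, and its $H$-bimodule-indecomposable quotient, after dividing out the part that already maps isomorphically, is exactly what must be killed. Set $W^i=\mathrm{s}\bigl(H^{i-1}(\mF(W^{\le i-1}),d)\bigr)/H$ and choose, for each chosen generator, a cycle representative in $\mF(W^{\le i-1})$ of the corresponding class; declaring $d$ on the new generator to equal that representative gives the required splitting $d\colon W^i\to\mF(W^{\le i-1})$ of the projection, with image automatically in $\mF^+(W^{\le i-1})$ because the classes being killed are decomposable modulo $H$. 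Extending $\rho$ by zero on $W^i$ is consistent because these cycles map to boundaries (indeed to zero) in $(H,0)$. One checks $d^2=0$ on $W^i$ since $d$ of a representing cycle is zero, and that the new level-$i$ homology has been annihilated while lower levels are untouched.

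The main obstacle will be verifying the uniqueness clause and, underlying it, the claim that $H^{i-1}(\mF(W^{\le i-1}),d)$ is a well-behaved $H$-bimodule whose indecomposables can be split off functorially. In the commutative-algebra setting of Halperin-Stasheff this rests on the Koszulity/minimality of the bigraded resolution and a careful comparison of two models via an inductively constructed isomorphism that is the identity on $W^0$; I would reproduce that comparison argument, lifting an abstract isomorphism on homology to an isomorphism of the free operads level by level, the obstructions to which vanish precisely because each model has acyclic positive-level homology. The delicate point is operadic rather than algebraic: I must make sure that \emph{decomposables} in $\mF^+(W)$, the operadic ideal $(R)$, and the bimodule structure interact correctly through the nonlinear composition operations $\circ_i$, so that ``$H$-bimodule-indecomposable'' is the right notion and the splitting is canonical. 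Granting that this operadic bookkeeping mirrors the associative case — which is the content of translating \cite{HS} and \cite{Markl} to planar operads — the induction closes and both existence and uniqueness follow.
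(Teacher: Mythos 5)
Your proposal is correct and follows essentially the same route as the paper: the paper offers no separate proof of this Proposition, but relies on the level-by-level Halperin--Stasheff construction (translated ``almost verbatim'' from \cite{HS}, with \cite{Markl} as the symmetric-operad precedent), which is exactly what you reconstruct --- $W^0=V$, $W^1=\mathrm{s}R$, inductively killing the $H$-bimodule-indecomposables of the level homology, and uniqueness via an inductively built comparison isomorphism. The delicate operadic points you flag (decomposability of $d$, the bimodule structure on level homology, the comparison argument) are precisely the ones the paper itself defers to the cited references, so your treatment matches the paper's own level of detail.
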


\begin{Def}
Let $O(k)=O^i_j(k)$ be a sequence of bigraded vector spaces.
We say that a linear map $f:O \to O$ lowers the filtration level by  $k$ 
if $f(O^i) \subseteq O^{\leq(i-k)}$ for each $i$.
\end{Def}


\begin{Prop} 
Let $C$ be a DGPO such that $H = H_*(C)$ is a reduced GPO. Let $\rho:(\mF(W),d) \to (H,0)$ be the bigraded model of $H$.
Then there is a differential $D$ such that $(\mF(W),D)$ is a DGPO, called the {\em filtered model} of $C$,  
together with a quasi-isomorphism $\pi: (\mF(W),D) \to C$  such that
\bit
\item $D-d$ lowers the filtration level by 2
\item  for $x \in W^0$ 
$\pi_*(\bar{x}) = \rho(x) \in H_*(C)=H$.
\eit
If $\pi':(\mF(W),D') \to C$ is another filtered model, then 
there exists an isomorphism $\Phi: (\mF(W),D) \cong (\mF(W),D')$ such that $\Phi-id$ lowers the filtration level by 1.
\end{Prop}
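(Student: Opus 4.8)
The plan is to build the differential $D$ and the quasi-isomorphism $\pi$ simultaneously by induction on the filtration level, following the Halperin--Stasheff argument for commutative algebras transcribed into the language of operadic derivations. Since both a differential (an operadic derivation) and an operad map out of the free planar operad $\mF(W)$ are determined by their restriction to the generators $W$, at each level it suffices to prescribe $D$ and $\pi$ on the new generators $W^i$ and to extend by the Leibniz rule for the operations $\circ_i$ and by multiplicativity respectively; working over $\F_2$ no signs appear. Write $\phi=D-d$, which is again an operadic derivation and which we require to lower the filtration level by $2$. For the base of the induction, on $W^0=V$ one has $d=0$, so we set $D=0$ there and, for each generator $x\in V$, choose a $\de$-cycle $\pi(x)\in C$ representing the class $\rho(x)\in H=H_*(C)$; this yields the required identity $\pi_*(\bar{x})=\rho(x)$.

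Assume now that $D$ and $\pi$ have been defined on $\mF(W^{\leq n-1})$, that $D^2=0$ and that $\de\pi=\pi D$ there. For a generator $x\in W^n$ I first determine $Dx=dx+\phi x$ with $\phi x\in\mF(W^{\leq n-2})$: expanding $D^2x=0$ and using $d^2=0$ turns the condition into $d(\phi x)=\Omega(x)$, where $\Omega(x):=\phi(dx)+\phi^2(x)$ is built entirely from the values of $\phi$ already fixed on $W^{\leq n-1}$. The inductive identity $d\phi+\phi d+\phi^2=0$ shows that $\Omega(x)$ is a $d$-cycle; since $d$ has level-bidegree $-1$, the element $\Omega(x)$ sits in positive level whenever $n\geq 4$, so by acyclicity $H^i(\mF(W),d)=0$ for $i>0$ it is a $d$-boundary and any primitive is an admissible choice of $\phi x$. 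I then extend $\pi$ by arranging that $\pi(Dx)$ be a $\de$-boundary: it is a $\de$-cycle because $\de\pi(Dx)=\pi(D^2x)=0$, and its class in $H_*(C)$ vanishes (this is the point where the $D$- and $\pi$-steps must be coordinated, see below); one then sets $\pi(x)$ to be a primitive, $\de\pi(x)=\pi(Dx)$.

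Once $D$ and $\pi$ are constructed, the quasi-isomorphism property follows from the level filtration: because $\phi=D-d$ lowers the level, the associated graded of $(\mF(W),D)$ is $(\mF(W),d)$, and the spectral sequence of the filtration has $E_1=H^*(\mF(W),d)$, which by hypothesis is concentrated in level $0$ where it equals $H$. The map $\pi$ is filtration-preserving and induces on $E_1$ the isomorphism $\rho_*\colon H^0\cong H=H_*(C)$, so by the comparison theorem $\pi_*$ is an isomorphism. For uniqueness, given a second filtered model $(\mF(W),D')$ I construct the comparison isomorphism $\Phi$ with $\Phi-\mathrm{id}$ lowering the level by $1$ by the same inductive scheme: on a generator $x\in W^i$ one solves $D'\Phi(x)=\Phi(Dx)$ for $\Phi(x)-x\in\mF(W^{\leq i-1})$, the obstruction again being a $d$-cycle in positive level, hence a $d$-boundary by acyclicity.

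The main obstacle is the interlocking of the two inductions together with the control of the level-$0$ part of the obstructions. In low levels the cocycle $\Omega(x)$, and the analogous obstruction to extending $\pi$, can land in level $0$, where $H^0=H$ does not vanish; there one cannot appeal to acyclicity directly, and must instead use that the $D$- and $\pi$-constructions are performed together so that the residual level-$0$ obstruction is precisely the image, under $\pi_*=\rho$, of a $\de$-boundary in $C$, forcing the relevant class in $H$ to be zero. Verifying this compatibility, and checking that $D$ and $\pi$ are genuinely operadic with respect to the planar-tree structure, is the delicate part; the remaining points — the derivation bookkeeping, the collapse of the spectral sequence, and the extension of $\Phi$ — are formal.
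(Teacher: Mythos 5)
Your overall plan --- the Halperin--Stasheff double induction on the level, constructing $D$ and $\pi$ simultaneously --- is exactly the route the paper takes (it carries out the step at level $2$ explicitly and refers to Theorem 4.4 of Halperin--Stasheff for the continuation). But the execution has a genuine gap, in two related places. First, the assertion that $\Omega(x)=\phi(dx)+\phi^{2}(x)$ ``sits in positive level whenever $n\geq 4$'' is false. By definition $\phi$ lowers the level by \emph{at least} $2$, i.e.\ $\phi(\mF(W)^{i})\subseteq \mF(W)^{\leq i-2}$; its image is not concentrated in level $i-2$ but is spread over all levels down to $0$. Hence $\Omega(x)$ has a level-$0$ component for \emph{every} $n$, not only ``in low levels'', and the acyclicity $H^{i}(\mF(W),d)=0$ for $i>0$ never suffices on its own. (Note also that $\Omega(x)$ involves $\phi(\phi x)$, i.e.\ the unknown itself, so the equation $d(\phi x)=\Omega(x)$ must be solved by a downward induction on the level, not in one stroke.) For the $D^{2}=0$ obstruction the level-$0$ residue can indeed be killed the way you indicate --- apply $\pi$, observe that the image is a $\de$-boundary, and use that $\pi_{*}$ agrees with $\rho$ in level $0$ --- but this has to be run at every level of the induction, so it cannot be deferred as a remark about low levels.

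Second, and more seriously, your treatment of the extension of $\pi$ is wrong: the class $[\pi(Dx)]\in H_{*}(C)$ does \emph{not} vanish by any coordination argument. Taking the minimal solution of $D^{2}x=0$ (for instance $Dx=dx$ for $x\in W^{2}$, where $\phi(dx)$ lies in level $\leq -1$, hence is $0$), the class $\alpha(x)=[\pi(dx)]$ is in general a nonzero element of $H_{*}(C)$ --- the entire paper rests on the fact that for the little discs $\alpha(C)=[x_{1},x_{4}][x_{2},x_{3}]\neq 0$ (Lemma \ref{just}). If your vanishing claim were correct, one could always take $D=d$, every DGPO with reduced homology would be formal, and Theorem \ref{main} would be false. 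The missing mechanism is the one the paper makes explicit: choose a linear splitting $\eta:H_{*}(C)\to\mF(W^{0})$ of the projection onto homology and set $D=d-\eta\circ\alpha$ on the new generators. Since $\eta\circ\alpha$ takes values in level $0$, where both $d$ and $D$ vanish, this modification does not disturb $D^{2}=0$; it is precisely what makes $D-d$ lower the level by $2$; and it replaces $[\pi(Dx)]$ by $\alpha(x)-\alpha(x)=0$, which is what permits defining $\pi(x)$. In other words, the obstruction class is not killed, it is \emph{absorbed into} the differential $D$; formality then becomes the question of whether it can be removed again by an isomorphism, which is the content of Corollary \ref{obst}. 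Your uniqueness sketch suffers from the same flaw: the obstruction to solving $D'\Phi(x)=\Phi(Dx)$ also has a level-$0$ component, which must be handled by comparing both models in $H_{*}(C)$, not by acyclicity. The spectral-sequence argument that $\pi_{*}$ is an isomorphism is fine in outline.
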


\begin{Def}
Two DGPO $C_1$ and $C_2$ are weakly equivalent if there is a zig-zag of quasi-isomorphisms 
$C_1 \leftarrow \dots \rightarrow C_2$ connecting them.
\end{Def}
We notice that a zig-zag of equivalences induces an isomorphism in homology $H_*(C_1) \cong H_*(C_2)$.

\begin{Prop} \label{equi}
Let $C_1$ and $C_2$ be DGPO with $H=H_*(C_1)=H_*(C_2)$ reduced, and with respective filtered models $(\mF(W),D_1), \;
(\mF(W),D_2)$. Then $C_1$ and $C_2$ are weakly equivalent by a zig-zag inducing the identity in homology
if and only if there is an isomorphism $\Phi:(\mF(W),D_1) \to (\mF(W),D_2)$ 
with $\Phi-id$ lowering the filtration level by 1.
\end{Prop}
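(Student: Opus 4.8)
The plan is to prove the two implications separately, using throughout the existence and uniqueness of filtered models established in the previous proposition.

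For the implication from right to left, suppose $\Phi:(\mF(W),D_1)\to(\mF(W),D_2)$ is an isomorphism with $\Phi-\mathrm{id}$ lowering the filtration level by $1$, and let $\pi_1:(\mF(W),D_1)\to C_1$, $\pi_2:(\mF(W),D_2)\to C_2$ be the structural quasi-isomorphisms of the two filtered models. Since an isomorphism of DGPO is in particular a quasi-isomorphism,
$$C_1 \xleftarrow{\;\pi_1\;} (\mF(W),D_1) \xrightarrow{\;\pi_2\circ\Phi\;} C_2$$
is a zig-zag of quasi-isomorphisms, so $C_1$ and $C_2$ are weakly equivalent. To see that it induces the identity on $H$, I would note that, since $D_i-d$ lowers the level by $2$ and $H^i(\mF(W),d)=0$ for $i>0$, the homology $H_*(\mF(W),D_i)$ is canonically identified with $H$ sitting in level $0$, and under this identification $\pi_{i*}$ restricts to $\rho$ on the generators $W^0$. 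An endomorphism whose difference with the identity lowers the level acts as the identity on the level-$0$ associated graded, hence $\Phi_*=\mathrm{id}$ on $H$; therefore $(\pi_2\circ\Phi)_*\circ\pi_{1*}^{-1}=\mathrm{id}_H$.

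The substantial direction is the converse. The key step I would isolate is a naturality statement: a quasi-isomorphism $f\colon C\to C'$ of DGPO with $H_*(C)=H_*(C')=H$ transports filtered models. Indeed, if $\pi\colon(\mF(W),D)\to C$ is a filtered model of $C$ and $f_*=\mathrm{id}_H$, then $f\circ\pi$ is again a quasi-isomorphism with $(f\circ\pi)_*(\bar x)=\rho(x)$ on $W^0$, so it is a filtered model of $C'$; by the uniqueness clause it is isomorphic to the given $(\mF(W),D')$ by some $\Psi$ with $\Psi-\mathrm{id}$ lowering the level by $1$. When $f_*=\phi\neq\mathrm{id}_H$ I would first lift $\phi^{-1}$ to an automorphism $\Theta$ of the bigraded model, conjugate $D$ by $\Theta$, and run the same argument on $f\circ\pi\circ\Theta$, so that the resulting isomorphism of filtered models differs from a fixed lift of $\phi^{-1}$ by a level-lowering map. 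Given a zig-zag $C_1\leftarrow E_1\rightarrow\cdots\rightarrow C_2$ realizing the weak equivalence, each $E_j$ carries $H_*(E_j)\cong H$ and a filtered model, and applying this step to each arrow (using $f\circ\pi$ for forward arrows and $g\circ\pi'$ for backward ones) produces isomorphisms between the filtered models of consecutive objects whose composite is the desired $\Phi\colon(\mF(W),D_1)\to(\mF(W),D_2)$.

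The hard part will be the bookkeeping of the homology automorphisms induced by the individual arrows. One must check that the bigraded model is functorial enough that an automorphism of $H$ lifts to an automorphism of $\mF(W)$, well defined modulo level-lowering maps, and that the level-lowering property survives the compositions taken along the zig-zag. Granting this, the hypothesis that the \emph{composite} automorphism of $H$ is the identity makes the accumulated factors $\phi_j^{\pm1}$ cancel, so $\Phi_*=\mathrm{id}$ and, by the same associated-graded argument as in the first direction, $\Phi-\mathrm{id}$ lowers the filtration level by $1$ rather than merely agreeing with a nontrivial lift.
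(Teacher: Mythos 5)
The paper itself offers no proof of this proposition: it is part of the Halperin--Stasheff machinery that the paper imports (via \cite{HS}, and Markl's operadic version \cite{Markl}), so your attempt must be judged against that standard argument. Your right-to-left direction is correct, and can even be streamlined: since there is nothing below level $0$, a map with $\Phi-\mathrm{id}$ level-lowering is \emph{literally} the identity on the level-zero part $\mF(W^0)$, whose cycles represent all of $H_*(\mF(W),D_i)$, so the zig-zag $C_1 \xleftarrow{\pi_1} (\mF(W),D_1) \xrightarrow{\pi_2\circ\Phi} C_2$ induces the identity. Your "naturality" step for an arrow with $f_*=\mathrm{id}_H$ (namely that $f\circ\pi$ is again a filtered model, so the uniqueness clause produces an isomorphism with level-lowering difference) is also exactly right, and it is the heart of the standard proof.

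The genuine gap is in your treatment of arrows with $f_*=\phi\neq\mathrm{id}_H$. You assert that a lift of $\phi$ to the bigraded model is "well defined modulo level-lowering maps," and you conclude that since the $\phi_j^{\pm1}$ compose to the identity, the composite $\Phi$ has $\Phi-\mathrm{id}$ level-lowering "by the same associated-graded argument as in the first direction." Both claims fail. Two lifts of the same automorphism of $H$ differ by a chain automorphism of $(\mF(W),d)$ inducing the identity on $H$, and such automorphisms need \emph{not} have level-lowering difference from the identity: for the Gerstenhaber model one can send the level-zero generator $u\in W^0_2(4)$ to $u+\delta$ with $\delta\neq 0$ in the image of $d$ on the level-one, dimension-three, arity-four part (a $5$-dimensional image, by the rank computation in the paper's bigraded-model theorem), fix the other $W^0$ generators, and extend to a filtration-preserving chain automorphism of $(\mF(W),d)$ using $H^i(\mF(W),d)=0$ for $i>0$; this automorphism induces the identity on $H$ yet its difference from the identity is nonzero in level $0$. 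The same example shows your final inference is backwards: the associated-graded argument gives "level-lowering difference $\Rightarrow$ identity on homology," and the converse implication, which is what you invoke, is false. The standard repair avoids lifting automorphisms of $H$ altogether: transport the identification of homology along the zig-zag, i.e.\ define $\sigma_j:H_*(E_j)\cong H$ for each intermediate object by composing the induced isomorphisms back to $H_*(C_1)=H$, and build the filtered model of each $E_j$ relative to $\sigma_j$. With these identifications \emph{every} arrow of the zig-zag induces the identity, so only your $f_*=\mathrm{id}$ step is needed; the hypothesis that the total composite is the identity is used exactly once, to guarantee that the identification transported to $C_2$ agrees with the given one, so that the final comparison is with the actual filtered model $(\mF(W),D_2)$. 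The isomorphisms produced by the uniqueness clause then compose and invert within the class $\mathrm{id}+(\text{level-lowering})$, yielding the desired $\Phi$.
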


\begin{Def}
A DGPO $C$ is formal if it is weakly equivalent to its homology $H_*(C)$ equipped with the trivial differential.
A topological planar operad $O$ is formal over $\F$ if the singular chain DGPO $C_*(O,\F)$ is formal. 
\end{Def}
From Proposition \ref{equi} we obtain the following corollary.
\begin{Cor} \label{obst}
A DGPO $C$ is formal if and only if there is an isomorphism $$\Phi: (\mF(W),d) \cong (\mF(W),D)$$ between the bigraded model of $H_*(C)$
and a filtered model of $C$, with $\Phi-id$ lowering the filtration level by 1.
\end{Cor}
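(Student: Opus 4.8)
The plan is to obtain the corollary by applying Proposition~\ref{equi} to the pair formed by the homology $(H,0)$ and $C$ itself. The preliminary observation I would make is that the bigraded model $(\mF(W),d)$ of $H$ already serves as a filtered model of the DGPO $(H,0)$: since $H_*((H,0))=H$ and $\rho\colon(\mF(W),d)\to(H,0)$ is by construction a quasi-isomorphism, one may take $\pi=\rho$ and the differential equal to $d$ itself. The requirement that $d-d=0$ lower the filtration level by $2$ is then vacuous, and the normalization $\pi_*(\bar x)=\rho(x)$ for $x\in W^0$ holds because $(H,0)$ carries the zero differential, so that $\rho_*(\bar x)=\rho(x)$.

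With this in hand I would apply Proposition~\ref{equi} with $C_1=(H,0)$ and $C_2=C$, whose filtered models are $(\mF(W),d)$ and $(\mF(W),D)$ respectively and whose common homology is the reduced GPO $H$. The proposition then states that $(H,0)$ and $C$ are weakly equivalent by a zig-zag inducing the identity on $H$ \emph{if and only if} there is an isomorphism $\Phi\colon(\mF(W),d)\to(\mF(W),D)$ with $\Phi-id$ lowering the filtration level by $1$. This is exactly the right-hand side of the corollary, and the direction of $\Phi$ matches; I would remark in passing that the direction is in any case immaterial, since an isomorphism with $\Phi-id$ lowering the filtration by $1$ is automatically invertible (by a locally nilpotent Neumann series) with inverse again of this form.

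The one genuinely substantive step is to match the definition of formality with the more rigid condition appearing in Proposition~\ref{equi}. By definition $C$ is formal when there is a bare zig-zag of quasi-isomorphisms $C\leftarrow\dots\to(H,0)$, and such a zig-zag induces only \emph{some} isomorphism $H_*(C)\cong H_*((H,0))=H$, that is an automorphism $\phi$ of the GPO $H$ which need not be the identity. To eliminate this ambiguity I would view $\phi^{-1}$ as an isomorphism of DGPO $(H,0)\to(H,0)$ --- a quasi-isomorphism, since it is an isomorphism --- and splice it onto the $(H,0)$ end of the zig-zag; the enlarged zig-zag induces $\phi^{-1}\circ\phi=id$. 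Conversely an identity-inducing zig-zag is in particular a zig-zag, so formality is equivalent to weak equivalence to $(H,0)$ by an identity-inducing zig-zag, and the corollary follows. I expect this reconciliation to be the main (and essentially the only) point needing attention; the rest is a direct specialization of Proposition~\ref{equi}.
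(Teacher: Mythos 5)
Your proposal is correct and is essentially the paper's own argument: the corollary is stated there as an immediate specialization of Proposition~\ref{equi} to $C_1=(H,0)$ and $C_2=C$, using that the bigraded model $(\mF(W),d)$ is itself a filtered model of $(H,0)$. Your extra care in splicing the automorphism $\phi^{-1}$ onto the zig-zag to pass from ``some zig-zag'' (the definition of formality) to an identity-inducing one fills in a detail the paper leaves implicit, but it is the same route.
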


Let $C$ be a DGPO such that $H_*(C)$ is reduced, and has the bigraded model $(\mF(W),d)$. 
We perform a partial construction of the filtered model of $C$ up to level 2 and define the first obstruction to the formality of $C$.

We start by defining up to level 1 $$\pi: \mF(W^{\leq 1}, d) \to C$$ by sending linear generators $x \in W^0$ to cycles $\pi(x) \in Z(C)$ representing 

$\rho(x) \in H_*(C)$,
and relations $y \in W^1$ to elements $\pi(y)$ such that 

$\pi(d(y))=d_C(\pi(y)) \in B(C)$ is a boundary representing the relation $y$
in the homology $H_*(C)=\mF(W^0)/(dW^1)$.
Now for $z \in W^2$ consider $a(z) :=  \pi(dz)$. Since $\pi$ commutes with the differentials, $d_C(\pi(dz))=\pi(ddz)=0$, and we can consider the homology
class  $\alpha(z) = \overline{a(z)} \in H_*(C)$.

We have that $\alpha  \in Hom_{-1}(W^2,H_*(C))$, where the latter is the vector space
of arity preserving linear maps that lower the dimension by 1.
Clearly $\pi$ can be extended to a DGPO-map up to level 2 
$$\pi:(\mF(W^{\leq 2}),d) \to (C,d_C)$$ if and only if $\alpha=0$.
In that case $D=d$ on $W^{\leq 2}$. Otherwise 
let $\eta:H_*(C) \to \mF(W^0)$ be a linear splitting of the projection. 
We define, on $W^2$, $D=d-\eta \circ \alpha$. 
\begin{Prop}
There exists a DGPO-map $\pi:(\mF(W^{\leq 2}),D) \to C$ commuting with the differentials.
\end{Prop}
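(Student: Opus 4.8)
The plan is to build $\pi$ on $W^2$ by a boundary-lifting argument, exploiting that the correction term $-\eta\circ\alpha$ in the definition of $D$ was designed precisely to annihilate the obstruction class. Since $D=d$ on $\mF(W^{\leq 1})$ and $\pi$ already commutes with the differentials there, and since $\mF(W^{\leq 2})$ is freely generated by $W^{\leq 2}$, it suffices to assign to each generator $z \in W^2$ an element $\pi(z) \in C$ satisfying $d_C(\pi(z)) = \pi(Dz)$. The unique operadic extension then automatically commutes with the differentials on all of $\mF(W^{\leq 2})$, because $\pi$ is a morphism of planar operads and both $d_C$ and $D$ are derivations with respect to the $\circ_i$-operations, so the identity $d_C\pi = \pi D$ need only be verified on generators.

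First I would record that $d$ vanishes on $W^0$ (it lowers the level, and there is no level $-1$), so every $w \in \mF(W^0)$ is a cycle and $\pi(w)$ is a cycle of $C$ whose homology class is $\rho(w)=p(w)$, where $p:\mF(W^0) \to \mF(W^0)/(dW^1)=H_*(C)$ is the quotient. Given $z \in W^2$ we have $Dz = dz - \eta(\alpha(z))$ with $dz \in \mF(W^{\leq 1})$ and $\eta(\alpha(z)) \in \mF(W^0)$, so $\pi(Dz)=a(z)-\pi(\eta(\alpha(z)))$ is already defined. Both summands are cycles: $a(z)=\pi(dz)$ is a cycle by construction (this is exactly what made $\alpha(z)$ well defined), and $\pi(\eta(\alpha(z)))$ is a cycle by the previous remark. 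Moreover $a(z)$ represents $\alpha(z)$ by definition, while $\pi(\eta(\alpha(z)))$ represents $p(\eta(\alpha(z)))=\alpha(z)$ because $\eta$ splits $p$. Hence $\pi(Dz)$ represents $\alpha(z)-\alpha(z)=0$ and is therefore a boundary.

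I would then choose, for each generator $z$ of $W^2$, an element $\pi(z) \in C$ with $d_C(\pi(z))=\pi(Dz)$, which exists by the preceding paragraph, and extend $\pi$ multiplicatively. It remains to confirm that $(\mF(W^{\leq 2}),D)$ is genuinely a DGPO, i.e. $D^2=0$: on $W^{\leq 1}$ this is $d^2=0$, while on $W^2$ one computes $D(Dz)=D(dz)-D(\eta(\alpha(z)))=d(dz)-d(\eta(\alpha(z)))=0$, using $D=d$ on $\mF(W^{\leq 1})$, then $d^2=0$ and $d=0$ on $\mF(W^0)$. As $D^2$ is a derivation vanishing on generators, it vanishes identically.

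The computations are routine; the only point demanding care is the homology bookkeeping of the second paragraph, namely identifying $a(z)$ and $\pi(\eta(\alpha(z)))$ as two cycles representing the \emph{same} class $\alpha(z)$, so that their difference is a boundary. This is where the defining formula $D=d-\eta\circ\alpha$ does its work: the term $\eta\circ\alpha$ is engineered exactly to cancel the obstruction $\alpha$, converting the potentially nontrivial class $\alpha(z)$ into the explicit boundary $d_C(\pi(z))$.
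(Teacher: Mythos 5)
Your proposal is correct and follows essentially the same route as the paper: the core step in both is observing that $\pi(D(z))=\pi(d(z))-\pi(\eta(\alpha(z)))$ is a cycle representing $\alpha(z)-\alpha(z)=0$, hence a boundary, and then defining $\pi(z)$ as a chosen $d_C$-preimage. Your additional verifications (that $D^2=0$, and that checking $d_C\pi=\pi D$ on generators suffices by the derivation property and freeness of $\mF(W^{\leq 2})$) are routine details the paper leaves implicit.
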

\begin{proof}
For any linear generator $z \in W^2$ by construction the cycle $$\pi( D(z))=\pi(d(z))-\pi (\eta( \alpha(z)))$$ represents the homology class $\alpha(z)-\alpha(z)=0$
and so there exists $c  \in C$ such that $d_C(c)=\pi(D(z))$. Set $\pi(z)=c$.
\end{proof}
This procedure can be continued as in the proof of Theorem 4.4 in  \cite{HS} to obtain the filtered model of $C$. 
We are now concerned about the formality of $C$.

\begin{Def} \label{part}
Let $Hom_0(W^1 , H_*(C))$ be the vector space of linear maps preserving both the arity and the dimension.
  There is a homomorphism
$$\partial: Hom_0(W^1 , H_*(C)) \rightarrow Hom_{-1}(W^2,H_*(C))$$
defined as follows: for  $f:W^{1} \to H_*(C)$, extend it to a linear map $f:W^{\leq 1}=W^0 \oplus W^1 \to H_*(C)$
by $f_{| \,W^0}=\rho$. By the universal property $f$ defines an operad map
$\hat{f}:\mF(W^{\leq 1}) \to H_*(C)$. 
Then $\partial(f)$ is the composition
$$\partial(f):W^2 \stackrel{d}{\rightarrow} \mF(W^{\leq 1}) \stackrel{\hat{f}}{\rightarrow} H_*(C)$$  
\end{Def}

\begin{Prop}
There exists an isomorphism $\Phi:(\mF(W^{\leq 2}),d) \to (\mF(W^{\leq 2}),D)$ such that $\Phi-id$ lowers the filtration by 1 if and only if 
$\alpha \in Im(\partial)$.
\end{Prop}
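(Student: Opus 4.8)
The plan is to reduce the existence of $\Phi$ to a single linear condition on the level-$2$ generators and then to identify that condition with the assertion $\alpha \in Im(\partial)$. Since $\Phi-id$ lowers the filtration by $1$, on $W^0$ it is forced to equal the identity, and on a generator $y \in W^1$ it must have the form $\Phi(y)=y+\phi(y)$ with $\phi(y) \in \mF(W^0)$ of the same dimension as $y$. First I would record that the chain-map identity $\Phi d = D\Phi$ on $W^0$ and $W^1$ holds automatically: on $\mF(W^0)$ one has $d=D=0$ (the differential is decomposable and there are no generators below level $0$, and $D=d$ on $W^{\leq 1}$), so for $y\in W^1$ both sides reduce to $d(y)$ because $\phi(y)$ is a $d$-cycle and $\Phi$ is the identity on $\mF(W^0)$. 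As $\Phi$ is an operad map and $d,D$ are derivations, it suffices to verify $\Phi d = D\Phi$ on generators, so the only genuine constraints come from $W^2$. I would also note that such a $\Phi$ is automatically invertible: in a fixed arity $k$ a tree-monomial has at most $k-1$ vertices, each of level $\leq 2$, so the level grading is bounded there, $\Phi-id$ is locally nilpotent, and $\Phi^{-1}=\sum_{n\geq 0}(id-\Phi)^n$ is a well-defined filtered inverse.

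The heart of the argument is the computation on a generator $x \in W^2$. Writing $\Phi(x)=x+\psi(x)$ with $\psi(x) \in \mF(W^{\leq 1})$ of dimension equal to that of $x$, and using $Dx=dx-\eta\alpha(x)$ together with $D=d$ on $\mF(W^{\leq 1})$, the identity $\Phi(dx)=D\Phi(x)$ becomes
\[ \Phi(dx)-dx+\eta\alpha(x)=d\psi(x). \]
This equation is solvable for $\psi(x)$ precisely when its left-hand side lies in the image of $d : \mF(W)^1 \to \mF(W)^0=\mF(W^0)$; since $H_*(C)=\mF(W^0)/Im(d)$ via the projection $p=\rho$, this happens exactly when $p$ of the left-hand side vanishes. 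Now each monomial of $dx$ has total level $1$, so it contains exactly one letter from $W^1$ and the rest from $W^0$; consequently $\Phi(dx)-dx$ is obtained by replacing that single $W^1$-letter $y$ by $\phi(y)$ and is linear in the $\phi(y)$'s. Setting $f:=p\circ\phi \in Hom_0(W^1,H_*(C))$ and using that $p$ is a morphism of operads, I would identify $p(\Phi(dx)-dx)$ with the composite $\hat{f}(dx)$, which by Definition \ref{part} is exactly $\partial(f)(x)$. Since $p\circ\eta=id$, the solvability condition reads $\partial(f)(x)+\alpha(x)=0$ for every $x \in W^2$, that is $\alpha=\partial(-f)$.

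Both directions then follow. If a $\Phi$ of the required type exists, then $f=p\circ\phi$ satisfies $\partial(f)=-\alpha$, so $\alpha \in Im(\partial)$. Conversely, if $\alpha=\partial(g)$ for some $g \in Hom_0(W^1,H_*(C))$, I would set $\phi:=-\eta\circ g$ on $W^1$, so that $f=p\circ\phi=-g$ and $\partial(f)=-\alpha$; the left-hand side of the displayed equation then has $p$-image $\partial(f)(x)+\alpha(x)=0$, hence is a boundary, and choosing $\psi(x)$ to be any $d$-preimage defines $\Phi$ on $W^2$. The resulting operad map is a chain isomorphism with $\Phi-id$ lowering the filtration by $1$, as required.

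I expect the main obstacle to be the bookkeeping that makes the identity $p(\Phi(dx)-dx)=\partial(f)(x)$ precise: one must use the decomposability of $d$ together with its bidegree $(-1,-1)$ to guarantee that every monomial of $dx$ is linear in the $W^1$-generators, so that applying $\Phi$ produces no corrections quadratic in $\phi$, and one must use that $p=\rho$ is a morphism of operads so that it commutes with the operadic composites occurring in $dx$. Everything else is the routine verification that $\Phi$ is a well-defined and invertible DGPO-morphism.
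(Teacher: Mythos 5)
Your proposal is correct and is essentially the intended argument: the paper gives no proof of this proposition, deferring instead to Lemma 6.7 of \cite{formality2}, and the Halperin--Stasheff-style perturbation argument proved there is exactly what you carry out --- forcing $\Phi=id$ on $\mF(W^0)$, writing $\Phi=id+\phi$ on $W^1$ and $\Phi=id+\psi$ on $W^2$, reducing the chain-map identity to solvability of $d\psi(x)=\Phi(dx)-dx+\eta\alpha(x)$ inside $\mF(W^0)$, and identifying the image of the left-hand side in $H_*(C)=\mF(W^0)/\mathrm{Im}(d)$ with $\partial(p\circ\phi)(x)+\alpha(x)$. The routine points you flag (nilpotence of $\Phi-id$ in each fixed arity to get invertibility, checking the chain-map identity on generators only via the derivation property, and linearity of $\Phi(dx)-dx$ in $\phi$ because each monomial of $dx$ has level exactly $1$) are indeed the only remaining verifications, and you handle them correctly.
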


The proof is similar to that of Lemma 6.7 in \cite{formality2}.

\begin{Cor} \label{nonformal}
If $\alpha \notin Im(\partial)$ then $C$ is not formal. 
\end{Cor}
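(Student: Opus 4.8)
The plan is to prove the contrapositive: I will show that if $C$ is formal then $\alpha \in Im(\partial)$. By Corollary \ref{obst}, together with the uniqueness of filtered models up to a filtration-lowering isomorphism, formality of $C$ provides an isomorphism of DGPO $$\Phi : (\mF(W),d) \to (\mF(W),D)$$ to the filtered model $(\mF(W),D)$ extending the partial construction above, with $\Phi - id$ lowering the filtration level by $1$. The strategy is then to restrict this full isomorphism to the level-$2$ truncation and invoke the preceding Proposition, which characterizes the existence of such a truncated isomorphism precisely by the condition $\alpha \in Im(\partial)$.

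First I would check that the truncations $(\mF(W^{\leq 2}),d)$ and $(\mF(W^{\leq 2}),D)$ are genuine sub-DGPO. This follows from the structure of the two differentials: the bigraded model satisfies $d(W^i) \subseteq \mF(W^{\leq i-1})$, while $D - d$ lowers the filtration level by $2$, so $D(W^{\leq 2}) \subseteq \mF(W^{\leq 1}) \subseteq \mF(W^{\leq 2})$; since $\mF(W^{\leq 2})$ is closed under operadic composition, both $d$ and $D$ preserve it.

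The key step is to show that $\Phi$ maps $\mF(W^{\leq 2})$ into itself. Because the level of a decomposable element is the sum of the levels of its constituent generators, an element of filtration level $\leq 1$ can only involve generators of level $\leq 1$. Hence for a generator $x \in W^i$ with $i \leq 2$ the correction $\Phi(x) - x$, which lies in filtration level $\leq i-1 \leq 1$, is a combination of generators of level $\leq 1$, so $\Phi(x) \in \mF(W^{\leq 2})$. As $\Phi$ is an operad map and $W^{\leq 2}$ generates $\mF(W^{\leq 2})$, we obtain $\Phi(\mF(W^{\leq 2})) \subseteq \mF(W^{\leq 2})$. The same argument applied to $\Phi^{-1}$ — which again satisfies $\Phi^{-1} - id$ lowering the filtration by $1$, since in the filtration-complete setting $\Phi^{-1} = \sum_{k \geq 0}(id - \Phi)^k$ and each term $(id-\Phi)^k$ lowers the filtration by $k$ — shows that the restriction is an isomorphism $\Phi : (\mF(W^{\leq 2}),d) \to (\mF(W^{\leq 2}),D)$ of DGPO with $\Phi - id$ still lowering the filtration by $1$.

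Finally, the preceding Proposition states exactly that such a truncated isomorphism exists if and only if $\alpha \in Im(\partial)$, so we conclude $\alpha \in Im(\partial)$, which is the contrapositive of the claim. I expect the main obstacle to be the restriction step: one has to verify carefully that no level-$2$ generator can be concealed inside the lower-filtration correction $\Phi(x) - x$, and that the inverse isomorphism also restricts. This is precisely where the additive behaviour of the level grading under operadic composition, combined with the pro-nilpotence of $id - \Phi$, does the work.
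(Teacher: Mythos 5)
Your proof is correct and follows exactly the route the paper intends: the corollary is stated without proof because it is the contrapositive of combining Corollary \ref{obst} (plus uniqueness of filtered models) with the preceding Proposition, which is precisely your argument. Your careful verification that the full isomorphism $\Phi$ and its inverse restrict to the truncations $\mF(W^{\leq 2})$ --- using the additivity and non-negativity of the level grading --- is a worthwhile filling-in of the detail the paper leaves implicit.
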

Thus $\alpha$ is the first obstruction to the formality of $C$.

\section{The bigraded model of the homology of the little discs operad}

Let us describe the first generators of the bigraded model of the homology of the little 2-discs $H=H_*(D_2,\F_2)$, as planar operad, over 
$\F_2$.  We will consider the non-unitary version of the little discs $D_2$ with $D_2(0)=\emptyset$.

It is well known that $H$ is the Gerstenhaber operad, and has the following presentation as a {\em symmetric} operad.

\begin{Prop}
The Gerstenhaber operad $H$, as a symmetric operad, is generated by
\bit
\item the product
$m=m(x_1,x_2)=x_1 x_2$ in degree $0$ and arity $2$, 
\item  the bracket $b=b(x_1,x_2)=[x_1,x_2]$ in degree $1$ and arity $2$
\eit
modulo the  
following relations expressed using the module structure of $H_i$ over the group ring of the symmetric group $\F_2[\Sigma_i]$.
On the right we express the corresponding relations holding in Gerstenhaber algebras. 
\bit
\item the associativity relation 
$$m \circ_1 m = m \circ_2 m   \quad \quad  x_1 (x_2 x_3)=(x_1 x_2) x_3$$     
\item the commutativity relation for the product 
$$(21) m = m     \quad  \quad  x_1x_2 = x_2 x_1 $$
\item the (anti)commutativity relation for the bracket 
$$(21)b=b    \quad   \quad [x_1,x_2]=[x_2,x_1]$$
\item the Jacobi relation  
$$((123)+(231)+(312)) (b \circ_1 b ) =0  \quad [x_1,[x_2,x_3]]+[x_2,[x_3,x_1]]+[x_3,[x_1,x_2]] = 0$$
\item The Poisson relation 
$$b \circ_1 m = ((123)+ (213)) (m \circ_2 b) \quad  \quad  [x_1x_2,x_3]=x_1[x_2,x_3]+x_2[x_1,x_3]$$
\eit
\end{Prop}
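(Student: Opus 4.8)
The plan is to identify the operad presented by the generators $m,b$ and the five relations above with the classical computation of $H = H_*(D_2,\F_2)$, proceeding in four steps: locating the generators, checking that the relations hold, surjectivity, and injectivity.

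First I would recall that $D_2(k)$ is homotopy equivalent to the ordered configuration space $F(k,\R^2)$ of $k$ points in the plane, whose integral homology is torsion free (the Orlik--Solomon algebra of the braid arrangement, after Arnold). In particular $H_*(D_2(k),\F_2)$ is the mod $2$ reduction of a free $\Z$-module, with Poincar\'e polynomial $\prod_{j=1}^{k-1}(1+jt)$. In arity $2$ we have $D_2(2)\simeq S^1$, so $H_0(D_2(2))=\F_2\{m\}$ and $H_1(D_2(2))=\F_2\{b\}$, the product and the fundamental class: $m$ is the class of a path connecting the two configurations and $b$ is the loop winding one disc around the other. The swap in $\Sigma_2$ acts on $S^1$ by a half rotation, an orientation preserving map, so it fixes both classes, giving $(21)m=m$ and $(21)b=b$. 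Here I would stress the characteristic $2$ phenomenon: the Koszul sign that would make $b$ \emph{anti}symmetric and would decorate the Jacobi and Poisson relations is killed, so those relations take the sign free form stated above.

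Next I would check that the five relations actually hold in $H$, yielding a well defined operad map $\phi$ from the presented operad to $H$. Each is an identity among arity $3$ classes in $H_*(F(3,\R^2))$: associativity and commutativity lie in $H_0$ and follow from path connectivity, while Jacobi and Poisson are the defining relations of the Gerstenhaber (two-Poisson) operad $e_2$ and hold by F.~Cohen's computation of the homology of the little discs, or may be exhibited directly by a chain homotopy between the corresponding cycles in configuration space. Surjectivity of $\phi$ is then Cohen's statement that $m$ and $b$ generate $H_*(D_2)$ as an operad.

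The crux is injectivity, i.e.\ that the five relations are \emph{complete}. For this I would use that the presented operad is exactly the quadratic operad $e_2$, defined over $\Z$ as a \emph{free} $\Z$-module operad with the Poincar\'e--Birkhoff--Witt basis of Poisson monomials (degree-graded symmetric products of a chosen basis of Lie monomials in the bracket). Base change to $\F_2$ preserves this basis, so $\dim_{\F_2}(e_2(k)\ot\F_2)$ equals the rank of $e_2(k)$, which equals $\dim_{\F_2} H_*(F(k,\R^2),\F_2)$ since the latter is torsion free with the same Poincar\'e polynomial. A surjection between graded vector spaces of equal finite dimension in each arity and degree is an isomorphism, so $\phi$ is an isomorphism and the presentation is as claimed. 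The step I expect to require the most care is precisely this completeness: one must be sure that no extra relation is forced in characteristic $2$, which is guaranteed by the torsion freeness of the integral homology of configuration spaces together with the PBW property of $e_2$, rather than by any direct manipulation of the five relations.
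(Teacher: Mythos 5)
The paper offers no proof of this Proposition at all: it is introduced with ``It is well known that\dots'' and the presentation is F.~Cohen's classical computation of $H_*(D_2)$, so there is no argument in the paper to measure yours against. Taken on its own, your outline is correct and follows the standard route: the generators and the trivial $\Sigma_2$-action come from $D_2(2)\simeq S^1$, the relations and surjectivity come from Cohen's computation, and injectivity follows from a dimension count against Arnold's torsion-free computation of $H_*(F(k,\R^2);\Z)$ with Poincar\'e polynomial $\prod_{j=1}^{k-1}(1+jt)$ (consistent with the polynomials $P_2,P_3,P_4$ listed right after the Proposition). Two points in your write-up deserve care. First, the completeness of the relations \emph{is} essentially the PBW statement you invoke, so as written the hardest step is outsourced to a citation; this is legitimate and not circular (the PBW basis for the integral operad presented by these generators and relations can be obtained purely algebraically, e.g.\ via the distributive law $e_2\cong Com \circ Lie_1$ or an operadic rewriting argument, independently of the topology), but a more self-contained variant exists: the five relations allow one to rewrite any composite of $m$ and $b$ as a sum of products of left-normed Lie monomials, so these monomials \emph{span} the presented operad, and their images in $H$ are linearly independent by Arnold--Cohen; the surjection then forces an isomorphism with no appeal to PBW. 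Second, your base-change step silently uses the (true but unstated) fact that over $\Z$ the relations on these particular generators are already sign-free --- the degree shift makes $(21)b=b$, and the Jacobi and Poisson relations carry no signs when all inputs have degree $0$ --- otherwise ``the presented $\F_2$-operad is the mod $2$ reduction of the integral one'' is not automatic. Finally, a small slip: $m$ is the class of a point (a $0$-cycle) in the connected space $D_2(2)$, not of a ``path''.
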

\medskip
The Poincar\'e polynomials $P_k(t)=\sum_i \dim H_i(k)t^i$ in arity $k=2,3,4$ are
$$P_2(t)=1+t, \, P_3(t)=1+3t+2t^2, \,  P_4(t)=1+6t+11t^2+6t^3$$
We list basis generators 
\begin{align*}
&H_0(2)=\F_2 \{x_1x_2\} \\
&H_1(2)=\F_2\{ [x_1,x_2]\} \\
&H_0(3)=\F_2\{x_1 x_2 x_3\} \\
&H_1(3)=\F_2 \{  [x_1,x_2]x_3, \,  [x_1,x_3]x_2, \, [x_2,x_3]x_1 \} \\
&H_2(3)=\F_2\{  [[x_1,x_2],x_3], \, [x_1,[x_2,x_3]] \}  \\
&H_0(4)= \F_2\{ x_1 x_2 x_3 x_4  \}   \\
&H_1(4)=\F_2 \{   [x_1,x_2]x_3x_4, \, [x_1,x_3]x_2 x_4,\, [x_1,x_4]x_2 x_3 , \, [x_2,x_3]x_1x_4, \, [x_2,x_4]x_1x_3, \, [x_3,x_4]x_1x_2 \} \\
&H_2(4)=\F_2\{ [[x_1,x_2],x_3]x_4, \, [x_1,[x_2,x_3]]x_4,\, [[x_2,x_3],x_4]x_1, \, [x_2,[x_3,x_4]]x_1, \,  [x_1,[x_3,x_4]]x_2, \, \\
&[x_1,[x_3,x_4]]x_2, \, [x_1,[x_2,x_4]]x_3, \, [[x_1,x_2],x_4]x_3, \,   [x_1,x_2][x_3,x_4],\, [x_1,x_3][x_2,x_4],  \,[x_1,x_4][x_2,x_3] \} \\
&H_3(4)=\F_2\{ [x_1,[x_2,[x_3,x_4]]], \,   [x_1,[[x_2,x_4],x_3]], \,  [[x_1,x_4],[x_2,x_3]], \, [[x_1,x_3],[x_2,x_4]], \\
& \, [[x_1,[x_3,x_4]],x_2] , \, [[[x_1,x_4],x_3],x_2] \}  
\end{align*}
A geometric description of the cycles is given in \cite{palermo}.

The presentation of $H_*(D_2)$ as a planar operad is very different from the symmetric presentation.
For example consider the symmetric sub-operad of $H$ generated by the bracket $b$: up to one operadic suspension this is the Lie operad  $Lie$,
that we studied as a planar operad in \cite{Lie}.
\begin{Thm} \cite{Lie}
The operad $Lie$ is a free 
planar operad on an infinite number of generators growing exponentially in the arity.
\end{Thm}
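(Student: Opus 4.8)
The plan is to produce an explicit isomorphism $\mF(V)\cong Lie$ of planar operads for a suitable sequence $V$ of generators; by definition this is exactly freeness on $V$. Recall that, as a planar operad, $Lie(n)$ is the space of multilinear Lie monomials in $x_1,\dots,x_n$, of dimension $(n-1)!$, equipped only with the partial compositions $\circ_i$, the symmetric group action having been forgotten. The first observation is that, unlike in the symmetric case, the single bracket $b=[x_1,x_2]$ does \emph{not} generate $Lie$: iterating the $\circ_i$ on $b$ produces only the planar bracketings, i.e. the images of the $C_{n-1}$ planar binary trees read from left to right, and for $n\ge 4$ these span a proper subspace of $Lie(n)$ (already $C_3=5<6=\dim Lie(4)$). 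Heuristically the missing generators are the ``crossing'' monomials, such as $[[x_1,x_3],[x_2,x_4]]$ in arity $4$, which cannot be assembled from lower brackets without permuting inputs.

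First I would take $V(n)$ to be the space of planar indecomposables, namely the cokernel of the total composition map $\bigoplus \, Lie(k)\ot Lie(l)\to Lie(n)$ ranging over all nontrivial $\circ_i$ with $k+l-1=n$. A linear splitting realizes $V(n)\subset Lie(n)$ and induces the canonical map $\mF(V)\to Lie$, which is surjective in each arity by construction. It then remains to prove injectivity, and since everything is finite dimensional in each arity it suffices to check the numerical identity $\dim \mF(V)(n)=\dim Lie(n)=(n-1)!$ for all $n$.

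To organize the bookkeeping I would pass to generating functions. Writing $L(t)=\sum_{n\ge 1}(n-1)!\,t^n$ and $g(s)=\sum_{n\ge 2}\dim V(n)\,s^n$, freeness of the free operad imposes the tree recursion $L=t+g(L)$, equivalently $g(u)=u-L^{-1}(u)$, where $L^{-1}$ is the compositional inverse. This determines $\dim V(n)=1,0,1,4,22,\dots$ in low arity, and a direct estimate of the coefficients of $L^{-1}$ shows that $\dim V(n)$ grows exponentially in $n$, as asserted. Thus the entire content of the theorem is the injectivity of $\mF(V)\to Lie$: the decorated planar trees on $V$ are linearly independent in $Lie$.

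The crux, and the step I expect to be the main obstacle, is precisely this linear independence, which amounts to a unique factorization of each Lie monomial into a planar composite of generators. I would fix a classical monomial basis of $Lie(n)$ adapted to the linear order of the inputs --- for instance a Lyndon or left-normed basis --- arrange that a distinguished subset of basis monomials splits the indecomposable quotient and hence serves as $V$, and then prove by induction on arity that every basis monomial of $Lie(n)$ is the $\circ_i$-composite of generators in exactly one way. The delicate point is uniqueness: one must show that the Jacobi and (anti)symmetry relations are entirely absorbed by the passage to the chosen basis, so that no relations survive among the generators. In practice this is cleanest through a confluent rewriting system (an operadic Gröbner basis for the ideal of relations), where the work is to verify that all overlap ambiguities resolve; the characteristic of the ground field enters only through the (anti)symmetry rule $[x,y]=\pm[y,x]$, which in characteristic two degenerates to $[x,y]=[y,x]$.
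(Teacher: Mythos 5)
This theorem is not proved in the present paper at all: it is quoted from the reference \cite{Lie} (Salvatore--Tauraso, \emph{The operad Lie is free}), so your attempt has to be measured against that paper's argument. Your skeleton agrees with the counting side of the cited proof: take $V$ to be the planar indecomposables, note that $\mF(V)\to Lie$ is surjective and that freeness is equivalent to injectivity, and recover the generating series of generators from $L=t+g(L)$ by Lagrange inversion; your low-arity dimensions $1,0,1,4,22,\dots$ are indeed correct. Two caveats even here: the recursion $L=t+g(L)$ only \emph{determines} $\dim V(n)$ \emph{after} freeness is known (otherwise it is precisely the identity to be proved, so using it to compute $\dim V(n)$ is circular as a step toward injectivity), and the resulting growth is in fact factorial, $\dim V(n)$ being a positive proportion of $(n-1)!$, so ``exponential'' is only a lower bound.

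The genuine gap is the one you yourself flag: the injectivity of $\mF(V)\to Lie$, i.e.\ the unique-factorization statement, is never established --- you only name candidate strategies. Moreover, your principal suggestion, a confluent rewriting system (operadic Gr\"obner basis) ``for the ideal of relations'' generated by Jacobi and (anti)symmetry, is not well posed in the planar category. Those relations involve nontrivial permutations of the inputs (e.g.\ $[x_2,[x_3,x_1]]$, or $(21)b$ in the antisymmetry relation), hence they are elements of the free \emph{symmetric} operad on $b$ and cannot even be written down in the free \emph{planar} operad on $b$: in arity $3$ the only planar composites are $b\circ_1 b$ and $b\circ_2 b$, and no linear combination of them is the Jacobi relator. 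Consequently $Lie$ has no planar presentation ``bracket modulo Jacobi and antisymmetry'' --- the theorem itself says its planar presentation is free on infinitely many generators --- so there is no planar rewriting system whose overlap ambiguities one could resolve; the rewriting would have to take place in the symmetric (or shuffle) framework, and transporting the conclusion to the planar structure is exactly the hard step. The proof in \cite{Lie} does this work by producing an explicit monomial basis of $Lie(n)$ (of Lyndon/Hall type) compatible with the $\circ_i$-compositions, showing that composition of basis elements is triangular with respect to this basis, and deducing that every basis element factors uniquely as a planar composite of indecomposable ones. That combinatorial core is the entire content of the theorem, and it is absent from your proposal.
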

 The next planar generator of $Lie$ after the bracket is
$l=(1324) ((b \circ_2 b) \circ_1 b)$ corresponding to the arity 4 operation $[[x_1,x_3],[x_2,x_4]]$.

\

We compute all generators of the bigraded model of $H_*(D_2)$ up to arity 4.
\begin{Thm}
The bigraded model $(\mF(W),d) \to H=H_*(D_2,\F_2)$ of the Gerstenhaber operad over $\F_2$ 
satisfies 
$$W^0(\leq 4)=\F_2\{m,b,u,l\},  \, W^1(\leq 4)=\F_2\{A,B\} , \, W^2(\leq 4)=\F_2\{P,C\}$$ 
In arity 2, level 0, we have.
\bit
\item the product $m$ in dimension $0$ 
\item  the bracket $b$ in dimension $1$. 
\eit
In arity 3, level 1, we have 
\bit
\item a 1-dimensional generator $A$ resolving associativity such that $$d(A)=m\circ_1 m +m \circ_2 m,$$
\item a 2-dimensional generator $B$ resolving the Poisson relation such that 
$$d(B)=b\circ_1 m + b\circ_2 m + m\circ_1 b + m \circ_2 b \, .$$
\item In arity 4, level 0 , dimension 2, there is a generator 
$$u=(1324) ((m \circ_2 b) \circ_1 b) = [x_1,x_3][x_2,x_4] $$ 
\item In arity 4, level 0, dimension 3, we have the generator presented earlier
$$l=(1324) ((b \circ_2 b) \circ_1 b) = [[x_1,x_3],[x_2,x_4]] $$
\item In arity 4, level 2, dimension 2, we have a generator $P$ resolving the Pentagon relation 
$$d(P)=m\circ_1 A + m\circ_2 A+ A \circ_1 m + A \circ_2 m + A \circ_3 m \, .$$
\item Finally in arity 4, level 2, dimension 3, there is a generator $C$ with 
$$d(C)=A\circ_1 b + A \circ_2 b + A\circ_3 b + b\circ_1 A + b\circ_2 A+B\circ_1 m+B \circ_2 m+B \circ_3 m+m\circ_1 B+m\circ_2 B\, .$$
\eit
\end{Thm}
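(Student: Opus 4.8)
The plan is to build the model one level at a time, following the recursive construction of the bigraded model recalled above, and at each stage to reduce the statement to a finite-dimensional count against the Poincar\'e polynomials $P_2,P_3,P_4$ of $H=H_*(D_2,\F_2)$. The organizing principle is that decomposability in the planar operad is governed by a non-crossing condition: a class of $H$ is a planar composite of lower-arity classes exactly when its bracket/product pattern can be drawn without crossings on the ordered inputs $1<2<\cdots<k$. Here the Poisson and Jacobi relations are used to reduce all nested (non-crossing) patterns to genuine composites, so that only irreducible crossings survive as new generators.

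\emph{Level $0$.} First I would compute the planar suboperad generated by $m$ and $b$ arity by arity and compare with $H$. In arity $2$ it is all of $H(2)$. In arity $3$ the planar composites give one class in dimension $0$, the four classes $m\circ_1 b,\,m\circ_2 b,\,b\circ_1 m,\,b\circ_2 m$ in dimension $1$ spanning the $3$-dimensional $H_1(3)$ (one relation), and the two classes $b\circ_1 b,\,b\circ_2 b$ in dimension $2$ spanning $H_2(3)$ (the remaining bracketings being non-planar and eliminated by Jacobi); hence $W^0(3)=0$. In arity $4$ the same bookkeeping, using for instance the Poisson expansion $[x_1,[x_2,x_3]x_4]=[x_1,[x_2,x_3]]x_4+[x_1,x_4][x_2,x_3]$ to realize the nested double bracket $[x_1,x_4][x_2,x_3]$ as a composite, shows that every class is a planar composite except the two crossing patterns $u=[x_1,x_3][x_2,x_4]$ in dimension $2$ and $l=[[x_1,x_3],[x_2,x_4]]$ in dimension $3$. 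These are the indecomposables, giving $W^0(\leq 4)=\F_2\{m,b,u,l\}$.

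\emph{Levels $1$ and $2$.} The relations $W^1=sR$ are read off from the same tables: the single dimension-$0$ relation in arity $3$ (associativity) suspends to $A$ and the single dimension-$1$ relation (Poisson) suspends to $B$, and the displayed $d(A)$ and $d(B)$ are precisely these relations; a direct expansion over $\F_2$ using Leibniz and commutativity confirms that $d(A)$ and $d(B)$ lie in $\ker(\mF(V)\to H)$. I would then check that no further relations appear in arity $4$ by verifying $\dim\bigl(\mF(\{m,b,u,l\})/(A,B)\bigr)(4)=\dim H(4)$ in each dimension, which forces $W^1(\leq 4)=\F_2\{A,B\}$. For $W^2$ I compute the level-$1$ homology $H^1(\mF(W^{\leq 1}),d)$ in arity $\leq 4$ and extract its $H$-bimodule-indecomposables; the surviving classes are the pentagon combination and the mixed associativity--Poisson combination, whose suspensions are $P$ and $C$ with the stated differentials. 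Here one first checks that $d(d(P))=d(d(C))=0$ (the pentagon summands, respectively the mixed summands, cancel in pairs over $\F_2$) and then that these two classes are nonzero and indecomposable.

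The hard part will be the arity-$4$, level-$2$ step: enumerating the planar trees that contribute to $\mF(W^{\leq 1})(4)$ in each dimension, computing the differential $d$ on them, and showing that $H^1$ in arity $4$ has no indecomposables beyond the two classes giving $P$ and $C$. This is a finite but delicate calculation, since the free planar operad already grows rapidly in arity $4$ and one must carefully separate the genuinely new cycles from the many boundaries produced by $d(A)$ and $d(B)$.
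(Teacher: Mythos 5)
Your proposal follows essentially the same route as the paper's proof: the paper likewise identifies $u$ and $l$ as the only arity-$4$ classes not generated by $m$ and $b$, reads off $A$ and $B$ as the one-dimensional kernels of $\rho$ on $\mF(W^0)(3)$ in dimensions $0$ and $1$, and obtains $P$ and $C$ as the kernels of the differential on the level-$1$ part of $\mF(W^{\leq 1})(4)$. The ``finite but delicate calculation'' you defer is exactly what the paper records as the outcome of that linear algebra: the maps $d:\mF(W^1)_j(4)\to\mF(W^0)_{j-1}(4)$ have ranks $4$, $9$, $5$ for $j=1,2,3$, with cokernels recovering $H(4)$ and with one-dimensional kernels spanned by $dP$ and $dC$ (trivial kernel for $j=3$), so no further generators appear through arity $4$.
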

\begin{proof}
The product and the bracket generate under operad composition all of $H(3)$, all of $H_i(4)$ for $i \leq 1$, a subspace of $H_2(4)$ of codimension 1 not 
containing $u$, and a subspace of codimension 1 of $H_3(4)$ not containing $l$.
In arity 3 and dimension 0 
$$\mF(W^0)_0(3)=\{m \circ_1 m, m \circ_2 m \}$$ and the kernel  of $\rho:W_0^0(3) \to H_0(3)\cong \F_2$ is generated by $dA$.
In arity 3 and dimension 1  $$\mF(W^0)_1(3)= \{m \circ_1 b, m \circ_2 b, b \circ_1 m, b \circ_2 m\}$$ and the kernel of $\rho:W_1^0(3) \to H_1(3)\cong (\F_2)^3$ is generated 
by $dB$. In arity 3 and dimension 2 $$\mF(W^0)_2(3)=\{ b \circ_1 b, b \circ_2 b\} \cong H_2(3).$$ 
In arity 4 and dimension 1 $$\mF(W^1)_1(4) \cong (\F_2)^5$$ and $$d:\mF(W^1)_1(4) \to \mF(W^0)_0(4) \cong (\F_2)^5$$ has rank 4, with 
cokernel $H_0(4)$ and kernel generated by $dP$.
In arity 4 and dimension 2 $$\mF(W^1)_2(4) \cong (\F_2)^{10}$$ and $$d:\mF(W^1)_2(4) \to \mF(W^0)_1(4) \cong (\F_2)^{15}$$  
has rank 9 with cokernel $H_1(4)$ and kernel generated by $dC$.
In arity 4 and dimension 3 $$\mF(W^1)_3(4) \cong (\F_2)^{5}$$ and  
$$d:\mF(W^1)_3(4) \to \mF(W^0)_2(4) \cong (\F_2)^{15}$$   
has rank 5 with cokernel the codimension 1 subspace of $H_2(4)$ generated under operad composition by $m$ and $b$, and trivial kernel.
\end{proof}
\section{The cacti model for the little discs}

We consider the 2nd filtration $S$ of the surjection operad as a model for the chain operad of the little 2-discs operad. 
The operad $S$ is the cellular chain complex of the spineless cacti operad as explained in \cite{AGT}.
For any $k$  $S(k)$ is a free $\F_2[\Sigma_k]$-module.
\begin{Def} \label{cact}
The linear generators of $S(k)$ of dimension $i$ are sequences of length $i+k$ containing all integers $1,2,\dots, k$  such that
\bit
\item No adjacent numbers are equal
\item No ordered sub-sequence of the form $a b a b$ with $a \neq b$ occurs.  
\eit
\end{Def}
The symmetric group $\Sigma_k$ acts on $S(k)$ by acting on the values of sequences.

\begin{Exa} 

\bit
\item $S_0(2)$ has the $\F_2$-basis $\{12,21\}$  
\item $S_1(2)$ has the $\F_2$-basis $\{121,212\}$ 
\item $S_0(3)$ is the free $\F_2[\Sigma_3]$-module on $\{123\}$
\item $S_1(3)$ is the free  $\F_2[\Sigma_3]$-module on $\{1231,1213, 1232\}$
\item $S_2(3)$ is the free  $\F_2[\Sigma_3]$-module on $\{ 12321,12131\}$ 
\eit
\end{Exa}
The top dimensional generators of $S(k)$ are in dimension $k-1$.

The differential $\delta$ of $S$ is obtained by removing an element from a sequence in all possible ways, and adding the results, deleting those sequences 
that do not satisfy the conditions of definition  \ref{cact}.

For example $$\delta(12321)=2321 + 1321 + 1231 + 1232$$ since $1221$ is not allowed.

In order to define the operad composition of $S$ we need the following definition.
\begin{Def}
An interval decomposition of a sequence $y$ into $m$ subsequences is the overlapping partition of $y$ into $m$ ordered non-empty subsequences of consecutive elements $y_1,\dots,y_m$ of $y$ such that  two adjacent subsequences $y_i,y_{i+1}$ have in common exactly the last element of $y_i$ and the first element of $y_{i+1}$.
\end{Def}
For example the interval decompositions of $123$ into three subsequences are 
$$\{1,12,23\},\{1,1,123\},\{1,123,3\},\{123,3,3\},\{12,2,23\},\{12,23,3\}.$$

\begin{Def}
A composition of linear generators of $S$ of the form $$x \circ_i y \in  S(p+q-1),$$ with $x \in S(p)$ and $y \in S(q)$  is obtained by the following procedure: 
let $n$ be the number of occurrences of the value $i$ in $x$.

For each interval decomposition of $y$ into $n$ subsequences $y_1,\dots,y_n$
let $y'_j$ be the sequence obtained by adding $i-1$ to each value of $y_j$.

Add $q-1$ to each value in $x$ larger than $i$.

Replace  the $j$-th occurrence of $i$ in $x$ by the sequence $y'_j$ for each $j=1,\dots,n$.

Take the sum of the resulting sequences over all interval decompositions of $y$ into $n$ subsequences.
\end{Def}

For example $$2123 \circ_2 121 = (2)1(232)4+(23)1(32)4+(232)1(2)4$$ 
where we added some parenthesis to emphasize the subsequences $y'_j$. 

\begin{Thm} {\rm (3.5 in  \cite{MS})}
The operad $S$ is weakly equivalent to the chain operad of the little 2-discs operad $C_*(D_2,\F_2)$.
\end{Thm}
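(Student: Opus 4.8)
The statement is a known theorem of Berger, so the plan is not to construct a zig-zag from scratch but to assemble it from the two cited sources, making sure every comparison is operadic. The cleanest route follows the references \cite{B,BF} through the Barratt--Eccles operad. Write $\E$ for the (normalized, $\F_2$-coefficient) Barratt--Eccles operad and $\mathcal{X}$ for the surjection operad, so that $S=\mathcal{X}_2$ is its second filtration. First I would recall the \emph{table reduction} map of Berger--Fresse, an operad quasi-isomorphism $N_*(\E)\to\mathcal{X}$ that respects the complexity filtrations on both sides, hence restricts to a quasi-isomorphism $N_*(\E_2)\to\mathcal{X}_2=S$. Second, I would invoke Berger's theorem that the $n$-th filtration $\E_n$ is an $E_n$-operad, weakly equivalent to the singular chains $C_*(D_n,\F_2)$; specializing to $n=2$ gives $N_*(\E_2)\simeq C_*(D_2,\F_2)$. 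Composing the two weak equivalences yields $S\simeq C_*(D_2,\F_2)$.

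In parallel, and matching the geometric viewpoint of this section, I would verify the equivalence through spineless cacti, which is the content packaged in \cite{AGT}. Here the three steps are: (i) the spineless cacti operad $\mathit{Cact}$ is homotopy equivalent, as a topological operad, to $D_2$, by the recognition of cacti as an $E_2$-operad; (ii) $\mathit{Cact}$ carries a CW structure whose cells are indexed precisely by the admissible sequences of Definition \ref{cact}, with the cellular boundary and the cellular composition given by the combinatorial $\delta$ and the interval-decomposition formula defined above, so that $S$ is exactly the cellular chain DGPO $C^{CW}_*(\mathit{Cact},\F_2)$; (iii) for a cellular operad the inclusion of cellular chains into singular chains is an arity-wise quasi-isomorphism compatible with composition, giving $S\simeq C_*(\mathit{Cact},\F_2)\simeq C_*(D_2,\F_2)$.

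The main obstacle is coherence rather than any single computation. In the Barratt--Eccles route the real content is Berger's identification $\E_n\simeq C_*(D_n)$, which is the deep input; one must also check that table reduction is genuinely filtration-preserving, so that it descends to the $n=2$ piece. In the cacti route the delicate point is that the CW structure on $\mathit{Cact}$ must be operadic: composition of cells has to be expressible in cellular chains, and this is exactly why the composition defined above is a \emph{sum} of sequences over all interval decompositions rather than a single sequence. Establishing that these combinatorial formulas really reproduce the cellular chain operad of cacti, and that the cellular-to-singular comparison is a map of operads and not merely of complexes, is the bookkeeping heart of the argument; it is precisely what is carried out in \cite{AGT} and \cite{BF}, so in the present paper this theorem is invoked as a citation.
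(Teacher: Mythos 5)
The paper gives no proof of this theorem at all: it is invoked purely as a citation to Berger \cite{B} and Berger--Fresse \cite{BF}, with the cacti interpretation of $S$ deferred to \cite{AGT}, exactly as you note at the end of your proposal. Your reconstruction of the two routes (table reduction from the filtered Barratt--Eccles operad, and the cellular chains of spineless cacti) is an accurate account of how the cited references establish the result, so your proposal is correct and consistent with the paper's (purely referential) treatment.
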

By this theorem the formality of $S$ is equivalent to the formality of $D_2$ over $\F_2$.
Under the identification $H_*(S) \cong H_*(D_2) \cong H$
\bit
\item the product $m$ is represented by the
cycle $12 \in S_0(2)$ 
\item  the bracket $b$ is represented by the cycle $121+212 \in S_1(2)$
\eit

\section{Computing the obstruction}

We construct a DGPO homomorphism as in section 2  
$$\pi:\mF(W^{\leq 1}(\leq 4)) \to S$$
 in arity $\leq 4$ and level $\leq 1$.
The choice
\begin{align*}
& \pi(m)=12 \\
& \pi(b)=121+212 \\
& \pi(A)=0 \\
& \pi(B)=21312+23132+12131+31323 
\end{align*}
is compatible with the differential on $A$ because
 the multiplication is strictly associative in $S$, i.e.  $(12)\circ_1 (12)=123=(12) \circ_2 (12)$, and 
 so $\pi(dA)=\pi(m \circ_1 + m \circ_2 m)=(12)\circ_1(12)+(12)\circ_2(12)=0=\delta(0)=\delta(\pi(A))$.

It  is also compatible with the differential on $B$ since 
\begin{align*}
&\delta(\pi(B))=(1312+2312+2132+2131)+(3132+2132+2312+2313)+\\
&(2131+1231+1213)+(1323+3123+3132) = \\
&(1232+1312+3123)+ (1231+2313+2123)+ (1213+2123)+ (1232+1323)=  \\
& (121+212)\circ_1 (12)+(121+212)\circ_2 (12)+(12)\circ_1 (121+212)+12 \circ_2 (121+212) =\\
&\pi(b\circ_1 m + b\circ_2 m + m\circ_1 b + m \circ_2 b)= \\
& \pi(d(B)).
\end{align*}
 
There is a non-trivial obstruction to extend $\pi$ to arity 4 and level 2, where we have two generators $P$ and $C$: 

$\pi d(P)=0$ (no obstruction for this generator), but
$\pi d(C)$ is the sum of 
\begin{align*}
& \pi(B \circ_1 m)= 312423+314123+341243+123242+131242+131412+412434 \\
& \pi(B \circ_2 m)= 231413+214123+234143+241423+123141+414234 \\
& \pi(B \circ_3 m)=213412+234142+231342+121341+341424+313424+313234 \\
& \pi(m \circ_1 B)= 213124+231324+121314+313234 \\
& \pi(m \circ_2 B)=132423+134243+123242+142434 ,
\end{align*}
that is the sum of 24 generators, as two copies of $313234$ and $123242$ cancel out.

\begin{Lem} \label{just}
The homology class of the 2-cycle $\pi d(C) \in S_2(4)$ is $$\alpha(C)=(243)((m\circ_2 b)\circ_1 b)= [x_1,x_4][x_2,x_3] \in H_2(S(4)) \cong H_2(D_2(4)) \cong (\F_2)^{11}$$
\end{Lem}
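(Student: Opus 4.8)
The plan is to identify the homology class in $H_2(S(4))$ represented by the explicit $2$-cycle $\pi d(C)$, which we have already written as a sum of $24$ linear generators of $S_2(4)$. First I would verify directly from the differential $\delta$ of Definition~\ref{cact} that $\pi d(C)$ is indeed a cycle, as a consistency check; by construction $\delta(\pi d(C)) = \pi d(dC) = 0$, but it is worth confirming on the nose that the boundary of the $24$-term sum vanishes after all cancellations. The substantive task is then to read off which homology class this cycle represents.

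The key step is to exploit the $\F_2[\Sigma_4]$-module structure of $H_2(S(4)) \cong H_2(D_2(4)) \cong (\F_2)^{11}$, whose basis is listed in the earlier Gerstenhaber computation. I would compare $\pi d(C)$ against explicit $S$-cycles representing known homology classes. We already know $m$ is represented by $12$ and $b$ by $121+212$, so any product or bracket monomial in the Gerstenhaber basis has a canonical representative obtained by composing these cycles in $S$ using the interval-decomposition composition rule. In particular the claimed answer $(243)((m\circ_2 b)\circ_1 b) = [x_1,x_4][x_2,x_3]$ has a completely explicit representative cycle in $S_2(4)$, obtained by forming $(m\circ_2 b)\circ_1 b$ from the cycles $12$ and $121+212$ and then applying the permutation $(243)$ to the values. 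I would compute that representative and then show that $\pi d(C)$ equals it plus a boundary.

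Because we are working over $\F_2$ in the single degree $i=2$, the cleanest route is a direct linear-algebra comparison rather than a conceptual argument: compute the representative cycle for the candidate class $[x_1,x_4][x_2,x_3]$, subtract it from $\pi d(C)$, and exhibit an element of $S_3(4)$ whose differential is the difference, thereby proving the two cycles are homologous. Equivalently, one can expand both cycles in a chosen basis of $Z_2(S(4))$ modulo $B_2(S(4))$ and check equality of coordinates; since the top-dimensional generators of $S(4)$ live in dimension $3$ and $S_2(4)$, $S_3(4)$ are finite-dimensional $\F_2$-vector spaces of manageable size, this reduces to a finite computation. I would organise this by first listing the $\Sigma_4$-orbit of the candidate product-of-brackets representative, which should account for precisely the symmetry type appearing in $\pi d(C)$.

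The main obstacle will be the bookkeeping: one must be sure that $\pi d(C)$ is being compared against the correct representative, since several of the $H_2(4)$ basis classes, notably the three products of brackets $[x_1,x_2][x_3,x_4]$, $[x_1,x_3][x_2,x_4]$, $[x_1,x_4][x_2,x_3]$ together with the iterated-bracket-times-product classes, have representatives that can look similar at the chain level and differ only by boundaries. The risk is mistaking $[x_1,x_4][x_2,x_3]$ for another class in the same $\Sigma_4$-orbit, or overlooking a boundary correction. I would guard against this by pinning down the class through its image under the projection to indecomposables and by checking its behaviour under enough symmetries of $\Sigma_4$ to distinguish it from the other ten basis elements, rather than relying on a single coordinate comparison.
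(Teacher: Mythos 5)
Your proposal takes essentially the same route as the paper's proof: the paper writes down the explicit representative $y=141232+414232+141323+414323\in S_2(4)$ of $[x_1,x_4][x_2,x_3]$ (precisely the cycle obtained by composing the representatives $12$ and $121+212$ and applying $(243)$, as you suggest) and then exhibits an explicit $\gamma\in S_3(4)$ with $\delta(\gamma)=y-\pi d(C)$, proving the two cycles are homologous. The only difference is that you defer the bounding chain to a finite linear-algebra search rather than displaying it, and your additional safeguards (symmetry checks, projection to indecomposables) are not needed once $\gamma$ is in hand.
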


\begin{proof}
The class $[x_1,x_4][x_2,x_3]$ is represented by the 2-cycle   
$$y= 141232+414232+141323+414323 \in S_2(4).$$ Consider in $S_3(4)$ the element 
\begin{align*}
\gamma= 2314132 + 2341432+ 2131412 +3414243+ 2131242+ 2141232 + 2313242 +\\
 2414232+  3141323 + 3414323 + 3132423 + 3134243+ 1213141 + 4142434\, .
 \end{align*}
Then $\delta(\gamma)= y-\pi d(C)$ and this proves the claim. 
\end{proof}

By lemma \ref{just}  
$$\alpha: W^2(4)=\F_2 \{ C,P\} \to H_*(S(4))$$
 is defined by $\alpha(C)=[x_1,x_4][x_2,x_3]$ and $\alpha(P)=0$.

\begin{Thm}\label{last}
The class $\alpha$ is not in the image of 
$$\partial: Hom_0(W^1,H) \to Hom_{-1}(W^2,H)$$
and therefore it represents a non trivial obstruction to the formality of $S$.

\end{Thm}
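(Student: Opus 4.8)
The plan is to treat this as a finite linear-algebra problem over $\F_2$ and show directly that $\alpha$ cannot lie in $Im(\partial)$. First I would fix coordinates on the source. Since $W^1=\F_2\{A,B\}$ with $A$ of arity $3$ and dimension $1$ and $B$ of arity $3$ and dimension $2$, an arity- and dimension-preserving $f$ is determined by $f(A)\in H_1(3)$ and $f(B)\in H_2(3)$, so $Hom_0(W^1,H)\cong(\F_2)^3\oplus(\F_2)^2$. Concretely I write $f(A)=a_1[x_1,x_2]x_3+a_2[x_1,x_3]x_2+a_3[x_2,x_3]x_1$ and $f(B)=\beta_1[[x_1,x_2],x_3]+\beta_2[x_1,[x_2,x_3]]$, and recall that the induced operad map $\hat f$ agrees with $\rho$ on $W^0$, hence sends $m$ to the product and $b$ to the bracket of the Gerstenhaber operad.

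The first key step is to use the generator $P$ to eliminate $f(A)$ entirely. Because $d(P)$ involves only $A$, I would compute $\partial(f)(P)=\hat f(d(P))\in H_1(4)$ by expanding each composite $m\circ_i A$ and $A\circ_i m$ with the Poisson relation and rewriting in the listed basis of $H_1(4)$. The expected outcome is
$$\hat f(d(P))=a_1[x_1,x_2]x_3x_4+a_2[x_1,x_4]x_2x_3+a_3[x_3,x_4]x_1x_2,$$
a combination of three distinct, hence linearly independent, basis vectors. Since $\alpha(P)=0$, any $f$ with $\partial(f)=\alpha$ must have $a_1=a_2=a_3=0$, that is $f(A)=0$.

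It then remains to show that no choice of $f(B)$ realizes $\alpha(C)$. I would detect the obstruction on the three-dimensional \emph{product of brackets} subspace of $H_2(4)$ spanned by $[x_1,x_2][x_3,x_4]$, $[x_1,x_3][x_2,x_4]$ and $[x_1,x_4][x_2,x_3]$, projecting $\partial(f)(C)=\hat f(d(C))$ onto it. With $f(A)=0$ the terms $A\circ_i b$ and $b\circ_i A$ vanish, while $m\circ_1 B$ and $m\circ_2 B$ contribute only double-bracket-times-variable terms; thus only $B\circ_1 m$, $B\circ_2 m$, $B\circ_3 m$ survive, each expanded by Poisson. The computation should yield the projected value
$$(\beta_1+\beta_2)\bigl([x_1,x_2][x_3,x_4]+[x_1,x_4][x_2,x_3]\bigr),$$
in which the coefficients of $[x_1,x_2][x_3,x_4]$ and of $[x_1,x_4][x_2,x_3]$ coincide. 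But $\alpha(C)=[x_1,x_4][x_2,x_3]$ has these coefficients equal to $0$ and $1$ respectively, which is impossible over $\F_2$. Hence $\alpha\notin Im(\partial)$, and $S$ is not formal by Corollary \ref{nonformal}.

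The bulk of the work, and the step most prone to bookkeeping slips, is the operadic expansion: rewriting composites such as $[[x_1,x_2],x_3]\circ_i m$ in the Gerstenhaber basis by iterated Poisson expansion, while tracking the many pairwise cancellations peculiar to characteristic $2$. The conceptual crux is that the two equations must be used together. Projecting $\hat f(d(C))$ with \emph{general} $a_i$ onto the product-of-brackets subspace gives coefficient $\beta_1+\beta_2$ on $[x_1,x_2][x_3,x_4]$ but $a_1+a_2+a_3+\beta_1+\beta_2$ on $[x_1,x_4][x_2,x_3]$, so the asymmetry needed to hit $\alpha(C)$ could in principle be supplied by $f(A)$ alone. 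It is only the pentagon equation $\hat f(d(P))=0$, forcing $a_1=a_2=a_3=0$, that removes this freedom and makes the obstruction genuine; verifying that $\hat f(d(P))$ involves three independent basis vectors is therefore the linchpin of the argument.
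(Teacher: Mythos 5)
Your proposal is correct and takes essentially the same approach as the paper: the paper also evaluates $\partial$ on a basis of $Hom_0(W^1,H)$ (its $f_1,\dots,f_5$ correspond to your coordinates $a_1,a_2,a_3,\beta_1,\beta_2$), uses the linear independence of $\partial(f_j)(P)$ for $j=1,2,3$ together with $\alpha(P)=0$ to kill the $A$-component, and then notes that $\partial(f_4)(C)=\partial(f_5)(C)=[x_1,x_2][x_3,x_4]+[x_1,x_4][x_2,x_3]$ cannot equal $\alpha(C)=[x_1,x_4][x_2,x_3]$. Your only cosmetic deviation is working with the projection onto the product-of-brackets subspace instead of the full value of $\partial(f)(C)$; the paper's computation shows these coincide here, so nothing changes.
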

\begin{proof}
The only generators in $W^1$ that can contribute non-trivially to $Im(\partial)$ in arity $\leq 4$ are  
$A \in W_1^1(3)$ and $B \in W^1_2(3)$ that are both in arity 3.
Since
\bit
\item  $H_1(S(3))$ has a basis $\{ [x_1,x_2]x_3, [x_2,x_3]x_1 , [x_1,x_3]x_2 \}$ 
\item  $H_2(S(3))$ has a basis $\{ [[x_1,x_2],x_3], [x_1,[x_2,x_3]] \}$
\eit we have that
$Hom_0(W^1(3),H(S(3)))$ is 5-dimensional, spanned by 

\begin{align*} 
f_1:& A \mapsto m\circ_1 b=[x_1,x_2]x_3; B \mapsto 0  \\
f_2:&  A \mapsto [x_1,x_3]x_2; B \mapsto 0 \\
f_3:&  A \mapsto m \circ_2 b=[x_2,x_3]x_1 ; B \mapsto 0 \\
f_4:&  A \mapsto 0; B \mapsto b\circ_1 b= [[x_1,x_2],x_3] \\
f_5:&  A \mapsto 0; B \mapsto b\circ_2 b = [x_1,[x_2,x_3]]  
 \end{align*}
On the other hand 
$$Hom_{-1}(W^2(4),H(S(4)) = Hom(\F_2 \{ P\} , H_1(S(4))) \oplus Hom(\F_2 \{C\},H_2(S(4)))  \cong H_1(S(4)) \oplus H_2(S(4))$$ 
has dimension 6+11=17. 
By definition  \ref{part}

\begin{align*}
& \partial(f_j): P \mapsto \sum_{i=1}^2 \hat{f}_j (m) \circ_i \hat{f}_j(A) + \sum_{i=1}^3 \hat{f}_j(A) \circ_i \hat{f}_j(m) = \\
&\sum_{i=1}^2 (x_1x_2) \circ_i  f_j(A) + \sum_{i=1}^3 f_j(A) \circ_i (x_1x_2)  . \\
& \partial(f_j): C \mapsto \sum_{i=1}^2 \hat{f}_j(b) \circ_i \hat{f}_j(A) + \sum_{i=1}^3 \hat{f}_j(A) \circ_i \hat{f}_j(b) + \\
&  \sum_{i=1}^2 \hat{f}_j(m) \circ_i \hat{f}_j(B) + \sum_{i=1}^3 \hat{f}_j(B) \circ_i \hat{f}_j(m) = \\
&\sum_{i=1}^2 [x_1,x_2] \circ_i  f_j(A) + \sum_{i=1}^3 f_j(A) \circ_i [x_1,x_2]  +
\sum_{i=1}^2 (x_1x_2) \circ_i  f_j(B) + \sum_{i=1}^3 f_j(B) \circ_i (x_1 x_2) 
\end{align*}  

By substituting the values for $f_j(A)$ and $f_j(B)$ we find that

\begin{align*}
& \partial(f_1): P \mapsto [x_1,x_2]x_3x_4 ;  C \mapsto  \dots  \\
& \partial(f_2): P \mapsto [x_1,x_4]x_2x_3; C \mapsto \dots       \\
& \partial(f_3): P \mapsto [x_3,x_4]x_1x_2; C \mapsto \dots      \\
& \partial(f_4): P \mapsto 0; C \mapsto    [x_1,x_4][x_2,x_3]+[x_1,x_2][x_3,x_4]  \\
& \partial(f_5): P \mapsto 0 ; C \mapsto  [x_1,x_4][x_2,x_3]+[x_1,x_2][x_3,x_4] 
\end{align*}

The image of $C$ in the first three cases is not important since $\partial(f_j)(P)$ are linearly independent for $j=1,2,3$ but
$\alpha(P)=0$ and so if $\alpha \in Im(\partial)$ then it should be a linear combination of $\partial(f_4)$ and $\partial(f_5)$. 
However $\alpha(C)$ is not a multiple of $\partial(f_4)(C)=\partial(f_5)(C)$ and this proves that $\alpha \notin Im(\partial)$. 
\end{proof}
Theorem \ref{last} together with corollary \ref{nonformal} prove that $S$ is a non-formal planar operad over $\F_2$, and 
this proves the main theorem \ref{main}.

What happens in characteristic $p$ for $p$ odd? 
The work by Cirici-Horel \cite{CH} indicates that the obstruction defined by $\alpha$ vanishes in that case,
 and we should look for a higher obstruction. In fact we should construct the filtered model up to level $p$ in order to find an obstruction mod $p$.
 There is a striking similarity between this problem and the open problem of the formality over $\F_p$ of the little disc spaces $D_2(k)$,  homotopy equivalent to 
 the ordered configuration spaces $F_k(\R^2)$ of $k$  points in the plane, that 
 we discuss briefly at the end of the paper \cite{formality2}.


\begin{thebibliography}{99}

\bibitem{BF} C. Berger and B. Fresse, Combinatorial operad actions on cochains, Math. Proc. Camb. Phil. Soc. 137 (2004), 135-174
\bibitem{CH} J. Cirici and G. Horel, \'Etale cohomology, purity and formality with torsion coefficients, preprint arXiv:1806.03006
\bibitem{FW} B. Fresse and T. Willwacher, The intrinsic formality of $E_n$-operads, arXiv 1503.08699, to appear in: J.E.M.S. 
\bibitem{HS} S. Halperin and J. Stasheff, Obstruction to homotopy equivalences, Adv. in Math. 32 (1979),  233-279
\bibitem{Kont} M. Kontsevich, Operads and motives in deformation quantization, Letters in Mathematical Physics 48 n.1 (1999), 35-72
\bibitem{LV} P. Lambrechts and I. Volic, Formality of the little N-disks operad, Memoirs AMS vol. 230 n. 1079 (2014)
\bibitem{0703649} P. Lambrechts, V. Turchin and I. Volic,The rational homology of spaces of long knots in codimension $>2$, Geom. Topol. 14 (2010), 2151-2187
\bibitem{Markl} M. Markl, Models for operads, Comm. in Alg. vol. 24 n.4 (1996), 1471-1500
\bibitem{MS} J. McClure and J. Smith, Multivariable cochain operations and little n-cubes, J.A.M.S. 16 n.3 (2003), 681-704
\bibitem{palermo} P. Salvatore, Homotopy type of euclidean configuration spaces, 

Rendiconti del Circolo Matematico di Palermo Serie II Suppl. 66 (2001), 161-164
\bibitem{Lie} P. Salvatore and R. Tauraso, The operad Lie is free, Journal of Pure and Applied Algebra 213 (2009), 224-230
\bibitem{AGT} P. Salvatore, The topological cyclic Deligne conjecture, Algebr. Geom. Top. 9 (2009), 237-264
\bibitem{formality2} P. Salvatore, Non-formality of planar configuration spaces in characteristic two, Int.Math.Res.Not. (2018)
\bibitem{Sinha}D. Sinha, The topology of spaces of knots: cosimplicial models, Am.J.Math. vol. 131 n.4 (2009)
\bibitem{Tam} D. Tamarkin, Formality of chain operad of little discs, Letters in Mathematical Physics 66: 65-72, 2003
\bibitem{TW} V. Turchin and T. Willwacher,  Relative (non-)formality of the little cubes operads and the algebraic Cerf lemma, Am.J.Math. vol. 140 n.2 (2018), 277-316 

\end{thebibliography}
\end{document}